\newcolumntype{Y}{>{\raggedleft\arraybackslash}X}%
\newcommand{\proba}[1]{\ensuremath{{\Bbb P}\!\left[#1 \right]} }
\newtheorem{theorem}{Theorem}
\newtheorem{proposition}[theorem]{Proposition}
\newtheorem{lemma}[theorem]{Lemma}
\theoremstyle{definition}
\newtheorem{definition}[theorem]{Definition}
\newtheorem{remark}[theorem]{\bf Remark}
\newcommand{\RR}{\mathbb{R}}
\newcommand{\CC}{\mathbb{C}}
\newcommand{\NN}{\mathbb{N}}
\newcommand{\ZZ}{\mathbb{Z}}
\newcommand{\bbx}{\boldsymbol{x}}
\newcommand{\bbk}{\boldsymbol{k}}
\newcommand{\bbv}{\boldsymbol{v}}
\newcommand{\algsystem}{\mathcal{S}}
\newcommand{\solset}{\mathrm{Sol}}
\newcommand{\leader}{\mathrm{ld}}
\newcommand{\init}{\mathrm{init}}
\newcommand{\vect}[1]{\boldsymbol{#1}}
\newcommand{\D}[2]{ \ensuremath{ \frac{d #1 }{d #2 } } }
\title{Identifying Markov chain models from time-to-event data: an algebraic approach}
\author{
  Ovidiu Radulescu$^{1,*}$, Dima Grigoriev$^2$, Matthias Seiss$^3$, \\
  Maria Douaihy$^{1,4}$,
  Mounia Lagha$^4$,
  Edouard Bertrand$^5$,
  }
 \date{
    $^1$ LPHI, University of Montpellier and CNRS, Place Eugène Bataillon, Montpellier 34095, France
    \\
    $^2$ CNRS, Mathématiques, Université de Lille, 59655, Villeneuve d’Ascq, France \\
        $^3$ University of Kassel, Kassel, Germany \\
         $^4$ IGMM, University of Montpellier and CNRS, 1919 Rte de Mende, Montpellier 34090, France  
         \\
                $^5$ 
                IGH, University of Montpellier and CNRS, 141 Rue de la Cardonille, Montpellier 34094, France
                \\
 \bf \today
 }
\begin{document}

\maketitle

\abstract{  
 {
Many biological and medical questions can be modeled using time-to-event data in finite-state Markov chains, with the phase-type distribution describing intervals between events. We solve the inverse problem: given a phase-type distribution, can we identify the transition rate parameters of the underlying Markov chain? For a specific class of {\em solvable} Markov models, we show this problem has a unique solution up to finite symmetry transformations, and we outline a recursive method for computing symbolic solutions for these models across any number of states. Using the Thomas decomposition technique from computer algebra, we further provide symbolic solutions for any model.
Interestingly, different models with the same state count but distinct transition graphs can yield identical phase-type distributions. To distinguish among these, we propose additional properties beyond just the time to the next event. We demonstrate the method's applicability by inferring transcriptional regulation models from single-cell transcription imaging data.
}

{\bf Keywords}:Markov chains, phase-type distribution, inverse problem, symmetric polynomials, Thomas decomposition, 
transcriptional bursting.
}

\maketitle

\section{Introduction}
Continuous-time finite Markov chains have been largely used to model biological and medical phenomena
 {involving multiple discrete states
}\cite{bharucha1997elements}. For instance, the progression of a disease 
 {may have} multiple stages, including varying severity grades, recovery after treatment, and potential relapse. 
Similarly, gene expression functions intermittently, and often requires several prior non-productive and productive stages to prepare and initiate transcription or translation, or to pause these processes. In both examples, the number of  {states} and the timescales of transitions between them are usually unknown. 

 

 { A common dataset for such models consists of a sequence of specific events, such as the occurrence of an observed state. In renewal processes, relevant to many applications, the times separating successive events are independent, identically distributed variables, and the sequence is characterized by the distribution of these intervals \cite{feller1991introduction}. In simple cases, such as Poisson processes, this distribution is exponential. 
However, when multiple states are involved, this distribution deviates from the exponential. The renewal process scenario applies only when there is one observable state and the process resets to the same unobserved state after each observation.}
These scenarios are modeled by phase-type distributions, introduced by \cite{neuts1975probability}, 
(see Figure~\ref{figure1}).

Phase-type distributions were used 
 {in biology and medicine}
for modeling
transcription and translation bursting \cite{kumar_transcriptional_2015,soltani_intercellular_2016,kumar_constraining_2019,bokes_mixture_2020,Tantale2021,Pimmett2021,douaihy2023burstdeconv,zhang_exact_2024}, 
stochastic enzymatic reactions \cite{moffitt_extracting_2014},
ion channel dynamics \cite{lamar_reduction_2011},
drug kinetics \cite{faddy1993structured}, 
population genetics \cite{hobolth_phase-type_2019,hossjer_phase-type_2018},
and public health issues \cite{fackrell2009modelling,liquet2012investigating,asanjarani2021estimation,stone2022systematic}.


In this paper, our aim is to determine the underlying Markov chain, the number of states, and the transition rates between states from the phase-type distribution. This inverse problem has previously been tackled using maximum likelihood approaches, where the data likelihood was computed for an arbitrarily  chosen model and its maximization yielded optimal model parameters \cite{asanjarani2021estimation,stone2022systematic}.

In contrast to this direct inference approach, our method decomposes the inference problem into two parts. The first part involves regressing a parametric multi-exponential representation of the phase-type distribution \cite{dufresne2007fitting,douaihy2023burstdeconv}. 
The second part of our approach
is entirely algebraic. We formulate 
the inverse problem
as a system of polynomial equations
in the transition rate parameters
and solve this system for 
different numbers of states and
transition topologies. In doing so, 
we obtain explicit formulas relating
transition rate parameters to phase-type distribution parameters
for  all Markov chains that generate the same multi-exponential distribution.

Other approaches to finding  
Markov chain representations of a phase-type
distribution, using generating functions or the Laplace transform \cite{maier_algebraic_1991, commault_invariant_1996}, propose some, but not all, Markov chain representations and do not always offer explicit formulas for the transition rate parameters.



We provide a code implementation of our algorithms that formulate the inverse problem of any phase-type distribution as a system of polynomial equations and solve it using Thomas decomposition, a computer algebra method. Our approach also determines whether the model is solvable, meaning whether the solution to the inverse problem is unique or unique up to transformations by finite symmetries.

To demonstrate the relevance of our approach, we apply it to the context of transcriptional bursting, a well-documented phenomenon in gene expression research \cite{Dufourt2018,Tantale2021,Pimmett2021,Bellec2022,douaihy2023burstdeconv,damour2023transcriptional}. 

The structure of the paper is as follows: In Section 2, we discuss the phase-type distribution problem and find its solution in the non-degenerate case, specifically when the generator of the Markov chain has distinct eigenvalues. Section 3 introduces the symbolic inverse problem, which entails finding the transition rate parameters from the phase-type distribution parameters. We demonstrate that solving this inverse problem involves a system of polynomial equations symmetric in the phase-type distribution parameters. Additionally, we discuss solvable models in Section 3, where the inverse problem has a unique solution.
In Section 4, we present an example of a solvable model with an arbitrary number of states. Section 5 utilizes Thomas decomposition to solve the inverse problem. Section 6 addresses model degeneracy issues. Finally, in Section 7, we apply our findings to analyze transcriptional bursting data.

\begin{figure}[h!]
\begin{center}
\scalebox{0.5}{
\begin{tikzpicture}
 \SetUpEdge[lw         = 0.5pt,
            color      = black,
            labelstyle = {sloped,scale=2}]
  \SetVertexMath
       \tikzset{VertexStyle/.style={scale=1,
       draw,
            shape = circle,
            line width = 1pt,
            color = black,
            outer sep=1pt}}
  \Vertex[x=0,y=4]{1}
  \Vertex[x=3,y=10]{4}
  \Vertex[x=3,y=6]{3}
  \Vertex[x=6,y=4]{2}
\tikzset{EdgeStyle/.style={post, bend right = 40,line width = 2.5}}
\Edge[label=$k_1$](1)(3)
\tikzset{EdgeStyle/.style={post, bend right = 40,line width = 2.5}}
\Edge[label=$k_3$](3)(1)
\tikzset{EdgeStyle/.style={post, bend right = 40,line width = 2.5}}
\Edge[label=$k_2$](2)(3)
\tikzset{EdgeStyle/.style={post, bend right = 40,line width = 2.5}}
\Edge[label=$k_4$](3)(2)
\tikzset{EdgeStyle/.style={post,line width = 2.5}}
\Edge[label=$k_5$](3)(4)
\tikzset{EdgeStyle/.style={dashed,post,bend right = 40,line width = 2.5}}
\Edge[label={\tiny return}](4)(3)
\end{tikzpicture}
}
\hfill
\includegraphics{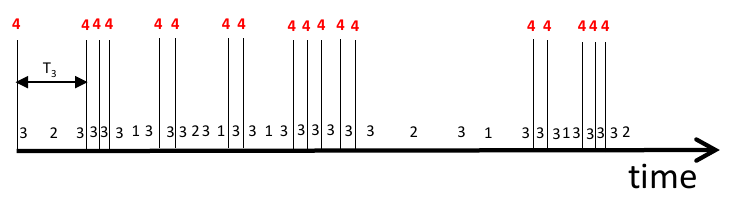}


\end{center}
\caption{\label{figure1} 
 {
Phase-type distribution for a three-state Markov chain model with state $4$ as the observable state. A typical dataset records the moments when $4$
is reached. The chain returns instantly from  $4$
to $3$ and restarts. The interval between successive observations, also defined as the time $T_3$ to reach $4$ from $3$, follows a phase-type distribution.
} 
 }
\end{figure}

\section{Phase-type distribution, direct problem}\label{direct_problem}

\subsection{Statistical definition}
Consider the successive observations of  {a state},  {as illustrated in}  Figure~\ref{figure1}. 
 {Following each observation, the system returns to 
a state $s$, so the interval between successive 
observations corresponds to the time required to reach the observed
state from $s$, denoted
by $T_s$}.
The distribution of $T_s$ is of the {\em phase-type}  {if it is derived from}
convolutions and mixtures (convex combinations) of
exponential distributions \cite{commault_phase-type_2003}.
 {Alternatively,
it can be defined as having a Laplace transform that is a rational function (see} Theorem 1 in  
\cite{commault_phase-type_2003}).
Phase-type distributions generalize exponential 
distributions,  {including mixtures of exponential or Erlang 
distributions. }


 {This paper considers multi-exponential distributions, characterized by a complementary cumulative distribution function (survival function) of the following type:}
\begin{equation}\label{multiexp}
S(t) = \proba{T_s > t}  = \sum_{i=1}^{N} A_i \exp (\lambda_i t),
\end{equation}
where $\lambda_i < 0$ are all negative, and  
$\sum_{i=1}^{N} A_i = 1$.
$A_i$ are not necessarily all positive, but are 
constrained by the conditions $S(t)  > 0, S'(t) < 0$
for all $t \geq 0$.

This family of distributions includes mixtures of
exponentials when all $A_i$ are positive but is not restricted to this case (some $A_i$ can be negative). 
The memoryless exponential distribution
corresponds to the case $N=1$,  {while cases with  $N>1$} correspond
to phase-type distributions with memory.
 {Focusing on} multi-exponential distributions 
is  not restrictive,
  {as} any phase-type distribution can be approximated with arbitrary precision by distributions of  {this} type. 
Furthermore, as will be
 {shown} in the next section, finite Markov chains generically
lead to multi-exponential distributions.

\subsection{Finite state continuous time Markov chains}
In this subsection, we introduce phase-type distributions using Markov chain representations and demonstrate the conditions under which we obtain the multi-exponential distributions described by \eqref{multiexp}. 
 {We assume that the reader is familiar with the basic theory of finite-state continuous-time Markov chains and refer to \cite{bharucha1997elements,meyn2012markov} for excellent presentations of this subject. 
We also use the well known result from operational
research that any phase-type distribution can be defined as the distribution of the absorption time of a finite-state, continuous-time Markov chain \cite{commault_phase-type_2003, bladt_review_2005}.}

 {To formalize the problem,
we consider a continuous time Markov chain $M(t)$, where 
the first $N$ states are unobservable and the state $N+1$
is the observed state.}



The chain is defined by its generator $\vect{Q}$, an $(N+1) \times (N+1)$ matrix with entries  $Q_{ij}>0$ representing 
the transition rate from the state $i$ to the state $j$ for $j\neq i$, and $Q_{ii} =  -\sum_{j\neq i} Q_{ij}$ (zero row sum).


We are interested in the distribution of $T_s$, the first passage time to state $N+1$ starting from the state $s$ with $1\leq s \leq N$:
$$T_s = \inf \{ t \, | \,  t> 0, \, M(t) = N+1, \, M(0) = s \},$$
where $M(t)$ is the state of the chain at time  $t$.


 {Let us consider two similar models.
In the {\em model with return}, 
the chain returns instantly to state $s$ once reaching $N+1$
($Q_{N+1,s}= \infty$) and the process restarts.  
Since $s$ is the unique return state, the observation timings 
form a renewal process with inter-event times following the distribution of $T_s$ (see \cite{feller1991introduction} for an introduction to renewal processes). }
In the {\em model with absorption}, 
$N+1$ is absorbing and $Q_{N+1,i}=0$ for all $i \neq N+1$.  
In this case, $T_s$ is the time to absorption starting from $s$.

 {The distribution of $T_s$
  is the same for both models but suited to different applications: the absorption model applies when observations stop after the event, like in survival analysis or gene fixation, while the return model fits recurring events, such as transcription bursting or epidemiology. Even when the return model is contextually appropriate, we use the equivalent absorption model for distribution calculations due to technical considerations.

If several return states are 
possible, then the model with return is no longer a renewal process. The distribution of the inter-event times depends on the last return state and is no longer a phase-type distribution. }

 {As examples of Markov models we have 
generated all the models having $N$ unobservable states and satisfying the two conditions:
\begin{itemize}
\item 
The matrix $\vect{Q}$ has $2N-1$ independent, non-diagonal elements (kinetic parameters). This assumption is necessary to establish a one-to-one relation between the parameters of the $T_s$
distribution and the model's kinetic parameters, ensuring a unique solution to the inverse problem.
\item
Only the state $N$ leads to $N+1$,
 a simplifying assumption that fits the application discussed and aids in presentation. General results without this assumption are presented in the 
 Subsection~\ref{sec:noC1}.
\end{itemize}
For $N=3$, up to permutations of the states, there are nine models that satisfy these assumptions. We label them as $M1$
to $M9$. Among these models, only five are ergodic, i.e. 
any state can be reached from any other state through transitions of the Markov chain. We will see in Section~\ref{sec:solv} that ergodic models are interesting because they can yield a unique solution to the inverse problem. }

With the absorption assumption, the generator of the Markov chain model $M9$ that has $N=3$, represented in the Figure~\ref{figure2} is
\begin{equation}
\vect{Q} = 
\begin{pmatrix}
-k_1 &  0 & k_1 & 0 \\
0 & -k_2 & k_2 & 0 \\
k_3 & k_4 & -(k_3+k_4+k_5) & k_5 \\
0 & 0 & 0 & 0   
\end{pmatrix}.
\end{equation}



For the calculation of the distribution of $T_s$ it is convenient to introduce the state probabilities 
$X_i(t)  = \proba{M(t) = i \, | \, M(0) = s}$ for $1\leq i \leq N+1$ . 
The variables $X_i (t)$ satisfy the following system of linear differential
equations (the master equation):
\begin{equation}
\D{\vect{X}}{t} = \vect{Q}^T \vect{X} {(t)},
\label{diffeq}
\end{equation}
with the initial conditions $X_n(0) = \delta_{n,s}$, where $\delta_{n,s}$ is the Kronecker symbol and $\vect{Q}^T$ 
(called dual generator)
is the transpose of the generator matrix $\vect{Q}$, and 
$\vect{X}=(X_1,\ldots,X_N,X_{N+1})^T$.
Because the last column of $\vect{Q}^T$ is zero ($N+1$ is absorbing), the variables $X_1,\ldots,X_N$ satisfy an autonomous ODE system.
Indeed, let $\vect{\tilde Q}$ be the $N \times N$ matrix obtained by eliminating the last line and the last
column of $\vect{Q}^T$. For the example considered, we have
\begin{equation}
\vect{\tilde Q} = 
\begin{pmatrix}
-k_1 &  0 & k_3  \\
0 & -k_2 & k_4  \\
k_1 & k_2 & -(k_3+k_4+k_5)
\end{pmatrix}.
\end{equation}
Then $\vect{\tilde{X}}(t) = (X_1(t),\ldots,X_N(t))^T$ satisfies
\begin{equation}\label{eq:probaevolution}
\D{\vect{\tilde X}}{t} =  \vect{\tilde Q} \vect{\tilde X} {(t)},
\end{equation}
with initial conditions $X_i(0) = \delta_{i,s}$, where $\delta_{i,s}$
is the Kronecker delta.

 {Unless stated otherwise, we assume that the state 
$N+1$ can be reached only from state $N$.
Thus, the} remaining variable $X_{N+1}$ satisfies 
\begin{equation}\label{eq:probaevolutionN}
\D{X_{N+1}}{t} = Q_{N,N+1} X_N {(t)},
\end{equation}
with the initial condition $X_{N+1}(0) = 0$.

Let us consider that the eigenvalues $\lambda_i, 1\leq i \leq N$ of the matrix $\vect{\tilde Q}$ satisfy the following {\em non-degeneracy condition}
\begin{equation}\label{nondegeneracy}
\lambda_i \ne \lambda_j, \text{ for all } 1\leq i \neq j \leq N,  \text{ and } \max_{1\leq i \leq N} (\lambda_i ) < 0.
\end{equation}
\begin{remark}
It can be shown that 
$\max_{1\leq i \leq N} (\lambda_i ) \leq 0$  is always satisfied. Indeed, $X_i(t), 1\leq i \leq N$ are probabilities, therefore remain bounded for all $t$, which means that one cannot have $\lambda_i>0$ for some $1\leq i \leq N$. If some $\lambda_i>0$, then there are solutions of \eqref{eq:probaevolution} that increase without bound when $t$ increases. 
We will show later that strong connectedness  of the transition graph 
of the Markov chain
reduced to the vertices $1,\ldots,N$ implies that none of the eigenvalues 
$\lambda_i$ can be zero.
One should note that, unlike $\vect{Q}$,  $\vect{\tilde{Q}}^T$ is not a continuous time Markov chain generator and does not satisfy the zero row sum rule.
\end{remark}
Considering that the condition \eqref{nondegeneracy} is satisfied, the
solutions of \eqref{eq:probaevolution},\eqref{eq:probaevolutionN} read
\begin{equation}
\vect{\tilde{X}}(t) = \sum_{i=1}^N C_i \vect{u}_i e^{\lambda_i t } \quad \mathrm{and} \quad X_{N+1}(t) = Q_{N,N+1}  \sum_{i=1}^N \frac{C_i  u_{i,N}}{\lambda_i} (e^{\lambda_i t } -1),
\label{solution}
\end{equation}
where $\lambda_i$ and $\vect{u}_i=(u_{i1},\ldots,u_{i,N})^T$ for $1 \leq i \leq N$ are eigenvalues and eigenvectors of $\vect{\tilde Q}$, respectively.

Because $N+1$ is absorbing, 
$$X_{N+1}(t)  = \proba{M(t) = {N+1} \, | \, M(0) = s} = \proba{T_s \leq t}$$ 
is the
cumulative distribution function of the time $T_s$.
We also define the survival function of $T_s$
as follows
\begin{equation}
S(t) = \proba{T_s > t} = 1 - X_{N+1}(t) = \sum_{i=1}^N A_i e^{\lambda_i t},
\label{survival}
\end{equation}
where
\begin{equation} \label{Ai}
 A_i = - \frac{Q_{N,N+1} C_i u_{i,N}}{\lambda_i}.
\end{equation}
The constants $A_i$ satisfy 
\begin{equation}\label{sumA}
\sum_{i=1}^N A_i = 1,
\end{equation}
which follows from $S(0)=1$.

This shows that in the non-degenerate case the distribution of $T_s$ is multi-exponential
with $2N-1$ independent parameters $\lambda_1,\ldots,\lambda_N$ and $A_1,\ldots,A_{N-1}$.
\begin{figure}[h!]
\begin{center}
\scalebox{0.3}{
\begin{tikzpicture}
 \SetUpEdge[lw         = 0.5pt,
            color      = black,
            labelstyle = {sloped,scale=2}]
  \SetVertexMath
       \tikzset{VertexStyle/.style={scale=1,
       draw,
            shape = circle,
            line width = 1pt,
            color = black,
            outer sep=1pt}}
  \Vertex[x=0,y=4]{1}
  \Vertex[x=3,y=10]{4}
  \Vertex[x=3,y=6]{3}
  \Vertex[x=6,y=4]{2}
\tikzset{EdgeStyle/.style={post, bend right = 20,line width = 2.5}}
\Edge[label=$k_1$](1)(2)
\tikzset{EdgeStyle/.style={post, bend right = 10,line width = 2.5}}
\Edge[label=$k_3$](2)(1)
\tikzset{EdgeStyle/.style={post, bend right = 20,line width = 2.5}}
\Edge[label=$k_2$](1)(3)
\tikzset{EdgeStyle/.style={post, bend right = 20,line width = 2.5}}
\Edge[label=$k_4$](3)(1)
\tikzset{EdgeStyle/.style={post,line width = 2.5}}
\Edge[label=$k_5$](3)(4)
\tikzset{EdgeStyle/.style={dashed,post,bend right = 40,line width = 2.5}}
\Edge[label={\tiny return}](4)(3)
\end{tikzpicture}
}
\hfill
\scalebox{0.3}{
\begin{tikzpicture}
 \SetUpEdge[lw         = 0.5pt,
            color      = black,
            labelstyle = {sloped,scale=2}]
  \SetVertexMath
       \tikzset{VertexStyle/.style={scale=1,
       draw,
            shape = circle,
            line width = 1pt,
            color = black,
            outer sep=1pt}}
  \Vertex[x=0,y=4]{1}
  \Vertex[x=3,y=10]{4}
  \Vertex[x=3,y=6]{3}
  \Vertex[x=6,y=4]{2}
\tikzset{EdgeStyle/.style={post, bend right = 20,line width = 2.5}}
\Edge[label=$k_1$](1)(2)
\tikzset{EdgeStyle/.style={post, bend right = 20,line width = 2.5}}
\Edge[label=$k_3$](2)(1)
\tikzset{EdgeStyle/.style={post, bend left = 40,line width = 2.5}}
\Edge[label=$k_2$](1)(3)
\tikzset{EdgeStyle/.style={post, bend left = 40,line width = 2.5}}
\Edge[label=$k_4$](3)(2)
\tikzset{EdgeStyle/.style={post,line width = 2.5}}
\Edge[label=$k_5$](3)(4)
\tikzset{EdgeStyle/.style={dashed,post,bend right = 40,line width = 2.5}}
\Edge[label={\tiny return}](4)(3)
\end{tikzpicture}
}
\hfill
\scalebox{0.3}{
\begin{tikzpicture}
 \SetUpEdge[lw         = 0.5pt,
            color      = black,
            labelstyle = {sloped,scale=2}]
  \SetVertexMath
       \tikzset{VertexStyle/.style={scale=1,
       draw,
            shape = circle,
            line width = 1pt,
            color = black,
            outer sep=1pt}}
  \Vertex[x=0,y=4]{1}
  \Vertex[x=3,y=10]{4}
  \Vertex[x=3,y=6]{3}
  \Vertex[x=6,y=4]{2}
\tikzset{EdgeStyle/.style={post, bend right = 40,line width = 2.5}}
\Edge[label=$k_1$](1)(2)
\tikzset{EdgeStyle/.style={post, bend right = 40,line width = 2.5}}
\Edge[label=$k_3$](2)(3)
\tikzset{EdgeStyle/.style={post, bend right = 40,line width = 2.5}}
\Edge[label=$k_2$](1)(3)
\tikzset{EdgeStyle/.style={post, bend right = 40,line width = 2.5}}
\Edge[label=$k_4$](3)(1)
\tikzset{EdgeStyle/.style={post,line width = 2.5}}
\Edge[label=$k_5$](3)(4)
\tikzset{EdgeStyle/.style={dashed,post,bend right = 40,line width = 2.5}}
\Edge[label={\tiny return}](4)(3)
\end{tikzpicture}
}
\hfill
\scalebox{0.3}{
\begin{tikzpicture}
 \SetUpEdge[lw         = 0.5pt,
            color      = black,
            labelstyle = {sloped,scale=2}]
  \SetVertexMath
       \tikzset{VertexStyle/.style={scale=1,
       draw,
            shape = circle,
            line width = 1pt,
            color = black,
            outer sep=1pt}}
  \Vertex[x=0,y=4]{1}
  \Vertex[x=3,y=10]{4}
  \Vertex[x=3,y=6]{3}
  \Vertex[x=6,y=4]{2}
\tikzset{EdgeStyle/.style={post, bend right = 40,line width = 2.5}}
\Edge[label=$k_2$](2)(3)
\tikzset{EdgeStyle/.style={post, bend right = 40,line width = 2.5}}
\Edge[label=$k_4$](3)(2)
\tikzset{EdgeStyle/.style={post, bend right = 40,line width = 2.5}}
\Edge[label=$k_1$](1)(2)
\tikzset{EdgeStyle/.style={post, bend right = 40,line width = 2.5}}
\Edge[label=$k_3$](3)(1)
\tikzset{EdgeStyle/.style={post,line width = 2.5}}
\Edge[label=$k_5$](3)(4)
\tikzset{EdgeStyle/.style={dashed,post,bend right = 40,line width = 2.5}}
\Edge[label={\tiny return}](4)(3)
\end{tikzpicture}
}
\hfill
\scalebox{0.3}{
\begin{tikzpicture}
 \SetUpEdge[lw         = 0.5pt,
            color      = black,
            labelstyle = {sloped,scale=2}]
  \SetVertexMath
       \tikzset{VertexStyle/.style={scale=1,
       draw,
            shape = circle,
            line width = 1pt,
            color = black,
            outer sep=1pt}}
  \Vertex[x=0,y=4]{1}
  \Vertex[x=3,y=10]{4}
  \Vertex[x=3,y=6]{3}
  \Vertex[x=6,y=4]{2}
\tikzset{EdgeStyle/.style={post, bend right = 40,line width = 2.5}}
\Edge[label=$k_1$](1)(3)
\tikzset{EdgeStyle/.style={post, bend right = 40,line width = 2.5}}
\Edge[label=$k_3$](3)(1)
\tikzset{EdgeStyle/.style={post, bend right = 40,line width = 2.5}}
\Edge[label=$k_2$](2)(3)
\tikzset{EdgeStyle/.style={post, bend right = 40,line width = 2.5}}
\Edge[label=$k_4$](3)(2)
\tikzset{EdgeStyle/.style={post,line width = 2.5}}
\Edge[label=$k_5$](3)(4)
\tikzset{EdgeStyle/.style={dashed,post,bend right = 40,line width = 2.5}}
\Edge[label={\tiny return}](4)(3)
\end{tikzpicture}
}

{ \bf $M2$ }\hfill { \bf $M3$} \hfill { \bf $M4$} 
\hfill { \bf $M8$} \hfill { \bf $M9$} \hfill

\end{center}
\caption{\label{figure2} 
 {
 Models with $N=3$ and return to the state $3$. 
 We show only those that are ergodic (every state can be reached from any other state through a directed path) and not related by symmetries (permutations of states  $1,2,3$).
All these models can generate exactly the same phase-type 
distribution.
 }
 }
\end{figure}

\section{Inverse  problem}
 {The inverse problem involves assuming that the survival function, and thus the parameters
$\lambda_1,\ldots,\lambda_N$, and $A_1,\ldots,A_{N-1}$, are known. These
can be estimated from data using least squares or maximum likelihood  \cite{dufresne2007fitting,Tantale2021,douaihy2023burstdeconv,Liuphd}. The goal is to identify which Markov
chains can represent this phase-type distribution.} A number of general results are known for this problem.
We recall here the following, important
result, that follows from  Theorem 12
in \cite{commault_phase-type_2003}:
\begin{proposition}
A multi-exponential phase-type distribution
with $N$ exponentials can be represented
by a Markov chain model 
of order $N$ (the order is defined  as the number of unobservable states). 
\end{proposition}

In this section we introduce the problem of computing the transition rate parameters, which are the non-zero, non-diagonal elements
of the matrix $\vect{Q}$ of a Markov chain of order $N$, 
from the $2N-1$ parameters of the phase-type distribution.
We are interested in models where this problem could have a unique 
solution, therefore the matrix $\vect{Q}$ has only $2N-1$ non-zero, non-diagonal elements. 
We range these elements in the $2N-1$ dimensional vector $\vect{k}$. After reordering, we place the element
$Q_{N,N+1}$ in the last position of $\vect{k}$, namely
$$
k_{2N-1} =Q_{N,N+1}.
$$

We show below that the inverse problem consists in solving $2N-1$ polynomial equations for  $\vect{k}$.

\subsection{Vieta's formulas}
The eigenvalues $\lambda_1,\ldots,\lambda_N$ are the roots of the the characteristic polynomial of  $\vect{\tilde Q}(\vect{k})$, defined as
\begin{equation}
P(\lambda) =  \mathrm{det} (  \lambda \vect{I} - \vect{\tilde Q}(\vect{k})) =  \lambda^N + a_{N-1}(\vect{k}) \lambda^{N-1} + \ldots +
a_1(\vect{k}) \lambda +  a_0(\vect{k}).
\label{secular}
\end{equation}
The coefficients $a_{i}(\vect{k})$ of the characteristic polynomial are polynomials with integer coefficients 
on the transition rates $\vect{k}$.

A first set of equations relating eigenvalues to the kinetic equations results from the Vieta's formulas
\begin{eqnarray}
L_1 &=& \sum_{i=1}^N \lambda_i  =  -a_{N-1} (\vect{k}),  \notag \\
L_2 &=& \sum_{i<j} \lambda_i \lambda_j   =  a_{N-2}  (\vect{k}), \notag  \\
& \vdots &  \notag  \\
L_N &=& \lambda_1 \lambda_2 \dots \lambda_N = (-1)^{N} a_0  (\vect{k}). \label{vieta}
\end{eqnarray}

\subsection{Eigenvector equations}\label{sec:evectors}
The amplitude parameters $A_1,A_2,\ldots,A_N$ of the survival function occur in \eqref{Ai} together with eigenvector 
components $u_{i,N}$ and solution coefficients $C_i$. We need to relate the latter to the transition rate parameters $\vect{k}$. 

First, we solve the eigenvector equation
\begin{equation}\label{eqeigen}
( \lambda \vect{I} - \vect{\tilde Q}(\vect{k})) \vect{u} =0.
\end{equation}

We look for solutions of \eqref{eqeigen} of the form
$$\vect{u}(\lambda,\vect{k}) = (u_1(\lambda,\vect{k}), \dots, u_{N}(\lambda,\vect{k})),$$ with $u_s(\lambda,\vect{k})=1$. 
In the subsection~\ref{sec:solv} we will see for which models this choice is possible. 

Because equations \eqref{eqeigen} have integer coefficients
in $\lambda,\vect{k}$, the eigenvector components $u_i(\lambda,\vect{k})$
are rational functions of $\lambda$ and $\vect{k}$.


The initial conditions satisfied by the variables $X_i$ and \eqref{solution}
provide a linear system of equations for the
constants $C_i$:
\begin{equation}
\sum_{j=1}^N u_i(\lambda_j,\vect{k}) C_j  = \delta_{i,s}, \quad 1\leq i \leq N.
\label{Csystem}
\end{equation}

Because of the non-degeneracy condition \eqref{nondegeneracy}, 
the system \eqref{Csystem} has a unique solution $C_i(\vect{\lambda},\vect{k}), \, 1\leq i \leq N$,
where $\vect{\lambda}=(\lambda_1,\ldots,\lambda_N)$.
The solutions $C_i(\vect{\lambda},\vect{k})$ are  
rational functions of $\vect{\lambda}$, and $\vect{k}$.

From  \eqref{Ai} we obtain $N-1$ independent equations for the transition rates $\vect{k}$:
\begin{equation}
-k_{2N-1} u_{N}(\lambda_i,\vect{k}) C_i(\vect{\lambda},\vect{k}) =  A_i \lambda_i, \quad 
1\leq i \leq N-1,
\label{evectors}
\end{equation}
where $k_{2N-1} = Q_{N,N+1}$.

The inverse problem is defined by the system
of $2N-1$ equations
formed by \eqref{vieta} and \eqref{evectors}.
When this system has solutions, the transition rates  $\vect{k}$ can be expressed as 
functions of the survival function parameters $\lambda_i$ and $A_i$, $1\leq i \leq N$.

\subsection{Illustrative example}
As an  example let us consider again the model $M9$ 
with a return state $s=3$, represented in Figure~\ref{figure2}.
For this model the characteristic polynomial is
$$
P(\lambda) = 
\lambda^3 + (k_1 + k_2 + k_3 + k_4 + k_5)\lambda^2 + (k_1k_2 + k_1k_4 + k_2k_3 + k_1k_5 + k_2k_5)\lambda + k_1k_2k_5.
$$

The system \eqref{eqeigen} has the solution:
$$\vect{u}(\lambda,\vect{k}) = \left( \frac{k_3}{k_1+\lambda}, \frac{k_3}{k_2+\lambda}, 1 \right).$$

The coefficients $(C_1,C_2,C_3)$ satisfy 
\begin{eqnarray}
C_1 \frac{k_3}{k_1+\lambda_1} + C_2 \frac{k_3}{k_1+\lambda_2} + 
C_3 \frac{k_3}{k_1+\lambda_3} &=& 0, \notag \\
C_1 \frac{k_3}{k_2+\lambda_1} + C_2 \frac{k_3}{k_2+\lambda_2} + 
C_3 \frac{k_3}{k_2+\lambda_3} &=& 0, \notag \\
C_1 + C_2 + C_3 &=& 1.
\end{eqnarray}
It follows that 
\begin{eqnarray}
C_1 &=& \frac{\lambda_1^2 + (k_1 + k_2)\lambda_1 +  k_1 k_2}
{(\lambda_1 - \lambda_2)(\lambda_1 - \lambda_3)}, \notag \\
C_2 &=& \frac{(k_1 + \lambda_2)(k_2 + \lambda_2)}{(\lambda_2 - \lambda_1)(\lambda_2 - \lambda_3)},\notag \\
C_3 &=& \frac{(k_1 + \lambda_3)(k_2 + \lambda_3)}{(\lambda_3 - \lambda_1)(\lambda_3 - \lambda_2)}.
\end{eqnarray}
The inverse problem for the model $M9$ consists of solving the following system
of five equations:
\begin{eqnarray}
  -(k_1 + k_2 + k_3 + k_4 + k_5)&=&L_1, \label{eq1}\\
  k_1k_2 + k_1k_4 + k_2k_3 + k_1k_5 + k_2k_5&=&L_2, \label{eq2}\\
 -k_1k_2k_5&=& L_3, \label{eq3}\\
-k_5 \frac{\lambda_1^2 + (k_1 + k_2)\lambda_1 +  k_1 k_2} 
{(\lambda_1 - \lambda_2)(\lambda_1 - \lambda_3)} &=& A_1 \lambda_1, \label{eq4}\\
-k_5 \frac{(k_1 + \lambda_2)(k_2 + \lambda_2)}{(\lambda_2 - \lambda_1)(\lambda_2 - \lambda_3)} &=& A_2 \lambda_2.
\label{eq5}
\end{eqnarray}

\subsection{Symmetrized systems}\label{sec:sym}
There is a difference between systems \eqref{vieta} and \eqref{evectors}. System \eqref{vieta} is entirely expressed using elementary symmetric polynomials in $\lambda_i$, whereas there is no obvious symmetry in system \eqref{evectors}. This difference is clearly visible in the example of model $M9$: equations \eqref{eq1}, \eqref{eq2}, and \eqref{eq3} are symmetrized, while equations \eqref{eq4} and \eqref{eq5} are not symmetric in $\lambda_i$ and $A_i$.

We show here that the system formed by \eqref{vieta} and \eqref{evectors} is equivalent to a    system  symmetrized in both $\lambda_i$ and
$A_i$. 
The advantage of a symmetrized system over a 
non-symmetrized one is that it handles 
simpler formulas, decreasing the computational 
burden of the symbolic tools. 

To this aim we use the following identities (due to Jacobi-Trudi \cite{Fulton-Harris}) that are valid for any distinct $N$ numbers $\lambda_i$ ($1\leq i \leq N$):
\begin{equation}
\sum_{i=1}^{N} \lambda_i^k \prod_{\overset{j=1}{j \neq i}}^{N} \frac{1}{\lambda_i- \lambda_j}=
\left\{
\begin{array}{ll}
0, & \text{if } k < N-1 \\
1, & \text{if } k = N-1 \\
h_{k-N+1}(\vect{\lambda}), & \text{if } k > N-1
\end{array}
\right. 
\label{jacobi}
\end{equation}
where  $h_{k-N+1}(\vect{\lambda})$ is the complete symmetric polynomial of degree $k-N+1$ in $N$ variables and $\vect{\lambda}=(\lambda_1,\ldots,\lambda_N)$. More precisely, the complete symmetric polynomials in the $N$ variables $\lambda_1,\ldots,\lambda_N$ have the form  
\begin{eqnarray}
 h_1(\vect{\lambda}) &=& \sum_{1\leq i_1 \leq N}^N \lambda_{i_1} , \ 
 h_2(\vect{\lambda}) = \sum_{1\leq i_1 \leq i_2 \leq N}^N \lambda_{i_1}\lambda_{i_2} , \ \dots , \notag \\ 
  h_m(\vect{\lambda}) &=& \sum_{1\le i_1\le \dots \le i_m\le N}\lambda_{i_1} \lambda_{i_2} \cdots \lambda_{i_m}, \ \dots  \ .   
\end{eqnarray}

In order to relate \eqref{evectors} and \eqref{jacobi} we first relate the coefficients $C_i$ to eigenvalues. 

Denote by $D_{si}(\lambda, \vect{k})$ the determinant of the $(N-1)\times (N-1)$ submatrix of the matrix $R(\lambda,\vect{k})=\lambda \vect{I}-\vect{\tilde Q}(\vect{k})$ obtained by deleting its $s$-th row and $i$-th column, for $1\le i\le N$.
Because elements of $R(\lambda,\vect{k})$ are 
linear combinations with 
integer coefficients of 
$\lambda$ and components of $\vect{k}$,
$D_{si}(\lambda, \vect{k}) \in \ZZ[\lambda, \vect{k}]$,
where $\ZZ[\lambda, \vect{k}]$ is the ring of 
polynomials with integer coefficients
in $\lambda$, and $\vect{k}$.

From the definition of $D_{si}(\lambda,\vect{k})$, it follows for $i=s$ that $\mathrm{deg}_{\lambda} (D_{ss}(\lambda,\vect{k}))=N-1$ and that the (leading) coefficient of $D_{ss}(\lambda,\vect{k})$ at the monomial $\lambda^{N-1}$ equals 1, while  for $1\le i\neq s\le N$ we have $\mathrm{deg}_{\lambda} (D_{si}(\lambda,\vect{k}))\le N-2$.
Thus, we have  
\begin{eqnarray} \label{Ds}
D_{ss}(\lambda,\vect{k})&=:&\lambda^{N-1} + c_{N-2}(\vect{k} ) \lambda^{N-2} + \ldots + c_1 (\vect{k} ) \lambda + c_0 (\vect{k} ), \\
D_{si}(\lambda,\vect{k})&=:& d_{i,N-2}(\vect{k} )\lambda^{N-2} + d_{i,N-3}(\vect{k} ) \lambda^{N-3} + \ldots + d_{i,1} (\vect{k} ) \lambda + d_{i,0} (\vect{k} ), \notag \\
&&\text{ if } i\neq s.  \label{DN}
\end{eqnarray} 

\begin{lemma}\label{kramer}
Assume that $\lambda_i\neq \lambda_j$ for $1\le i\neq j\le N$ and that $D_{ss}(\lambda_j,\vect{k}) \neq 0$ and $u_s(\lambda,\vect{k})=1$ for $ 1\le j\le N$. Then 
\begin{equation}\label{eigenvectors}
u_i(\lambda,\vect{k}) = (-1)^{|s-i|}\frac{D_{si}(\lambda,\vect{k})}{D_{ss}(\lambda,\vect{k})},
\end{equation}
and
\begin{equation}\label{90}
C_j=\frac{D_{ss}(\lambda_j,\vect{k})}{\prod_{1\le l\neq j\le N}(\lambda_j-\lambda_l)},\quad  1\le j\le N,    
\end{equation}
where $u_i(\lambda,\vect{k})$ are the eigenvector components,
and
$C_j$ form the unique solution of the system
$$\sum_{1\le j\le N} C_j=1,$$
$$\sum_{1\le j\le N} u_i(\lambda_j,\vect{k}) C_j=0, \quad 1\le i\neq s\le N.$$
\end{lemma}
\begin{proof}
If $D_{ss}(\lambda_j,\vect{k})\neq 0$, then, using Cramer's rule for the system 
\eqref{eqeigen} with $u_s(\lambda,\vect{k})=1$, we find  
$$\vect{u}(\lambda,\vect{k})=((-1)^{|s-1|}D_{s1}(\lambda,\vect{k}),\dots,(-1)^{|s-N|}D_{sN}(\lambda,\vect{k}))/D_{ss}(\lambda,\vect{k}).$$ 
Therefore, it holds
$$\sum_{1\le j\le N}  C_j=\sum_{1\le j\le N} \frac{D_{ss}(\lambda_j,\vect{k})}{\prod_{1\le l\neq j\le N}(\lambda_j-\lambda_l)}=1,$$ 
\noindent while 
$$\sum_{1\le j\le N} u_i(\lambda_j,\vect{k}) C_j=\sum_{1\le j\le N} \frac{(-1)^{|s-i|} D_{si}(\lambda_j,\vect{k})}{\prod_{1\le l\neq j\le N}(\lambda_j-\lambda_l)}=0, \quad  1\le i\neq s\le N,$$ 
\noindent where $C_j, 1\le j\le N$ are taken from (\ref{90})) due to (\ref{jacobi}) and to the 
fact that the degree in $\lambda$ of $D_{si}(\lambda,\vect{k})$ is $N-1$ and is smaller than $N-1$ for $i=s$
and $i\neq s$, respectively.
\end{proof}




The structure of the equations \eqref{evectors} suggests a natural way to symmetrize them. 
For $1\leq k \leq N-1$ we define $S_k$ to be the sum of $A_1 \lambda_1^k,\dots, A_N \lambda_N^k$. Using \eqref{evectors},\eqref{eigenvectors}  and \eqref{90} we obtain for this sum the formula
\begin{equation}\label{Sk}
S_k = \sum_{i=1}^N A_i \lambda_i^k =   
k_{2N-1}  \sum_{i=1}^N \lambda_i^{k-1} \frac{(-1)^{|s-N|+1}D_{sN}(\lambda_i,\vect{k})}{\prod_{1\le j\neq i\le N}(\lambda_i-\lambda_j)}.
\end{equation}


We can distinguish two cases:

\paragraph{i) $\mathbf s=N$.} 
In this case, according to \eqref{Ds}  $D_{sN}$ has degree $N-1$ in $\lambda$.
Using \eqref{Sk} 
and the Jacobi-Trudi identities \eqref{jacobi} we find the following $N-1$ symmetrized equations:
\begin{eqnarray}\label{eq:sym}
 S_1 &=& \sum_{i=1}^N A_i \lambda_i  = - k_{2N-1},   \notag \\
 S_2 &=& \sum_{i=1}^N A_i \lambda_i^2  = - k_{2N-1} ( h_1(\vect{\lambda}) + c_{N-2}(\vect{k} )),   \notag \\
 &\vdots& \notag \\
 S_{N-1} &=& \sum_{i=1}^N A_i \lambda_i^{N-1}  =  
 - k_{2N-1} (  h_{N-2}(\vect{\lambda}) + c_{N-2}(\vect{k}) h_{N-3}(\vect{\lambda}) + \ldots +
c_{2}(\vect{k})  h_1(\vect{\lambda}) + c_{1}(\vect{k} )). \notag \\
\end{eqnarray}
The inverse problem is thus equivalent to solving the symmetrized equations in \eqref{vieta} and \eqref{eq:sym} and 
computing the transition rates 
$\vect{k}$ as functions of the symmetric polynomials $L_1,\ldots,L_N$ and $S_1,\ldots,S_{N-1}$. We should note that
the complete symmetric polynomials $h_1(\vect{\lambda}),\ldots,  h_{N-2}(\vect{\lambda})$ can be expressed 
using the elementary symmetric polynomials $L_1,\ldots, L_{N-2}$, that is $h_1(\vect{\lambda}) = L_1$, $h_2(\vect{\lambda}) = L_1^2 - L_2$ and so on.

\paragraph{ii) $\mathbf s < N$.} 
In this case, according to \eqref{DN}  $D_{sN}$ has degree at most $N-2$. Using \eqref{Sk} and \eqref{jacobi} it follows that 
\begin{equation} \label{constraint}
 S_1 = \sum_{i=1}^N A_i \lambda_i = 0.
 \end{equation}
This means that the inverse problem is not well posed in this case. The $2N-1$ parameters 
$\lambda_1,\ldots,\lambda_N$, $A_1,\ldots,A_{N-1}$ are no longer independent, they need
to satisfy the constraint \eqref{constraint}.
Furthermore, we can no longer determine all the transition rate parameters
$\vect{k}$ uniquely.
There remain $2N-2$ equations that can be used to constrain the $2N-1$ transition rate parameters
\begin{eqnarray}\label{eq:sym2}
 S_2 &=& (-1)^{|s-N|+1} k_{2N-1} d_{N,N-2}(\vect{k}),   \notag \\
S_3 &=& (-1)^{|s-N|+1} k_{2N-1}(d_{N,N-2}(\vect{k})h_1(\vect{\lambda}) + d_{N,N-3}(\vect{k}) ),    \notag \\
 &\vdots& \notag \\
S_k &=& (-1)^{|s-N|+1} k_{2N-1}(d_{N,N-2}(\vect{k})h_{k-2}(\vect{\lambda}) + d_{N,N-3}(\vect{k})h_{k-3}(\vect{\lambda}) + \ldots +
d_{N,N-k}(\vect{k}) ) ,   \notag \\
  &\vdots& \notag \\
 S_{N-1} &=& 
 (-1)^{|s-N|+1} k_{2N-1} ( d_{N,N-2}(\vect{k})  h_{N-3}(\vect{\lambda}) + \ldots +
d_{N,2}(\vect{k})  h_1(\vect{\lambda}) + d_{N,1}(\vect{k} )).  
\end{eqnarray}
In this case the symmetrized system made of \eqref{vieta},\eqref{eq:sym2} can be used to compute the transition rates 
$\vect{k}$ as functions of the symmetric polynomials 
$L_1,\ldots,L_N$ and $S_2,\ldots,S_{N-1}$ and of one or several indeterminate transition rates. 

 {While the examples in this paper assume $N=s$,
the case $N\neq s$
is also relevant in practice. 
In this case, additional data such as state occupancy probabilities and lifetimes help to constrain the kinetic parameters by eliminating the remaining indeterminate variables. Symmetrized equations provide further constraints that must be solved alongside those from the supplementary data.
}

\subsection{Symmetrized system for the illustrative example}

For the illustrative example of the model $M9$, the matrix $R(\lambda,\vect{k})$ reads
$$R(\lambda,\vect{k})=
\begin{pmatrix}
\lambda +k_1 &  0 & -k_3  \\
0 & \lambda +k_2 & -k_4  \\
-k_1 & -k_2 & \lambda +(k_3+k_4+k_5)
\end{pmatrix}.
$$
Considering that the return state is $s=3$ we have 
$$
D_{33} (\lambda,\vect{k}) = \det 
\begin{pmatrix}
\lambda +k_1 & 0   \\
0  & \lambda + k_2 
\end{pmatrix} = (\lambda +k_1) (\lambda + k_2), 
$$
and the symmetrized equations 
$$
S_1 = - k_5,
$$
$$
S_2 = - k_5 ( L_1 +  k_1 + k_2  ).
$$
If the return state is $s=1$, we have
$$
D_{33} (\lambda,\vect{k}) = \det 
\begin{pmatrix}
0  & \lambda +k_2    \\
-k_1  & - k_2 
\end{pmatrix} = k_1 (\lambda + k_2), 
$$
and the symmetrized equations 
$$
S_1 = 0,
$$
$$
S_2 = - k_5 ( L_1 +  k_1   ).
$$
Finally, if the return state is $s=2$, we have
$$
D_{33} (\lambda,\vect{k}) = \det 
\begin{pmatrix}
 \lambda +k_1 & 0    \\
-k_1  & - k_2 
\end{pmatrix} = -k_2 (\lambda + k_1),
$$
and the symmetrized equations 
$$
S_1 = 0,
$$
$$
S_2 =  k_5 ( L_1 -  k_2   ).
$$

\begin{figure}[h!]
\begin{center}
\scalebox{0.6}{
\begin{tikzpicture}
 \SetUpEdge[lw         = 0.5pt,
            color      = black,
            labelstyle = {sloped,scale=2}]
  \SetVertexMath
       \tikzset{VertexStyle/.style={scale=1,
       draw,
            shape = circle,
            line width = 1pt,
            color = black,
            outer sep=1pt}}
  \Vertex[x=0,y=4]{1}
  \Vertex[x=3,y=10]{4}
  \Vertex[x=3,y=6]{3}
  \Vertex[x=6,y=4]{2}
\tikzset{EdgeStyle/.style={post, bend right = 20,line width = 2.5}}
\Edge[label=$k_1$](1)(2)
\tikzset{EdgeStyle/.style={post, bend left = 20,line width = 2.5}}
\Edge[label=$k_3$](3)(2)
\tikzset{EdgeStyle/.style={post, bend  right = 20,line width = 2.5}}
\Edge[label=$k_2$](1)(3)
\tikzset{EdgeStyle/.style={post, bend right = 20,line width = 2.5}}
\Edge[label=$k_4$](3)(1)
\tikzset{EdgeStyle/.style={post,line width = 2.5}}
\Edge[label=$k_5$](3)(4)
\tikzset{EdgeStyle/.style={dashed,post,bend right = 40,line width = 2.5}}
\Edge[label={\tiny return}](4)(3)
\end{tikzpicture}
}
\hskip2truecm
\scalebox{0.6}{
\begin{tikzpicture}
 \SetUpEdge[lw         = 0.5pt,
            color      = black,
            labelstyle = {sloped,scale=2}]
  \SetVertexMath
       \tikzset{VertexStyle/.style={scale=1,
       draw,
            shape = circle,
            line width = 1pt,
            color = black,
            outer sep=1pt}}
  \Vertex[x=0,y=4]{1}
  \Vertex[x=3,y=10]{4}
  \Vertex[x=3,y=6]{3}
  \Vertex[x=6,y=4]{2}
\tikzset{EdgeStyle/.style={post, bend left = 20,line width = 2.5}}
\Edge[label=$k_1$](2)(1)
\tikzset{EdgeStyle/.style={post, bend right = 20,line width = 2.5}}
\Edge[label=$k_3$](2)(3)
\tikzset{EdgeStyle/.style={post, bend  right = 20,line width = 2.5}}
\Edge[label=$k_2$](1)(3)
\tikzset{EdgeStyle/.style={post, bend right = 20,line width = 2.5}}
\Edge[label=$k_4$](3)(1)
\tikzset{EdgeStyle/.style={post,line width = 2.5}}
\Edge[label=$k_5$](3)(4)
\tikzset{EdgeStyle/.style={dashed,post,bend right = 40,line width = 2.5}}
\Edge[label={\tiny return}](4)(3)
\end{tikzpicture}
}

 { \bf a) }\hskip6truecm { \bf b)}  

\end{center}
\caption{\label{figure3} 
Non ergodic models with return. In both examples
the state $2$ is not in the strongly connected component of the 
return state $3$. 
a) the state $2$ is absorbing,  
 {so} the return time can be infinite 
with non-zero probability,  {meaning that} the distribution is not of the phase-type. 
b) the state $2$ cannot be reached 
from the return state $3$, 
  {allowing the ergodic chain obtained by eliminating state}
the state $2$  {to} be used to compute
the phase-type distribution.
 }
\end{figure}

\subsection{Solvable models}\label{sec:solv}
Different Markov chain models are distinguished by their transition graph  defined as the directed graph $G=(V,A)$ with vertices $V = \{1,\ldots,N+1\}$
and arcs $A = \{(i,j) \mid i,j\in V,\, i \neq j,\,  Q_{ij} \neq 0\}$. 

Let us consider that this graph satisfies some conditions related to the underlying modeled process:
\begin{enumerate}
\item[(C1)]
$N+1$ is reachable only from $N$. 
\item[(C2)]
From $N+1$
there is only one return arc, towards $s$, where $1\leq s \leq N$.
\end{enumerate}
C1 is a simplifying assumption that is satisfied in a number of applications, including the biological application discussed in this paper.  
 {All the results in this section remain valid, with slight modifications, even without this condition (see Subsection~\ref{sec:noC1}).
}
The condition C2 is needed in order to have a phase-type distribution.

For a directed graph $G$ and its vertices $v,w$ we denote $v\preceq w$ if there is a path in $G$ from $v$ to $w$. We say that $v,w$ are equivalent  {if} $v\preceq w, w\preceq v$. Equivalence classes of nodes are called strongly connected components (SCC).
If $G$ consists of a single SCC then we call $G$ {\em strongly connected}. 
A Markov chain model is called {\em   ergodic} if and only if its transition graph is strongly connected,
 {meaning that every node can be reached from any other node through a directed path.}
Let us consider the SCC of the return state $s$,
 {i.e. the smallest
strongly connected sub-graph containing $s$}.
Non ergodic models contain nodes $w$ outside this SCC. Two situations may arise for such nodes, as represented in Figure~\ref{figure3}. In Figure~\ref{figure3}a), $s \preceq w$ but the converse is not true. With non-zero probability, the chain starting
in $s$ never returns to $s$. This situation does not lead to a phase-type distribution and will not be considered. In Figure~\ref{figure3}b), $w$ can not be reached from $s$. In such a case, an ergodic model with fewer states, limited to the SCC of $s$, generates the phase-type distribution. 
For these reasons we can restrict our analysis to ergodic models only. 
We would like to know for which ergodic models  the inverse problem has a unique solution, eventually up to transformation by discrete symmetries. 
\begin{definition}\label{def:solvable}
We say that an model is {\em solvable} if the following solutions are satisfied:
\begin{enumerate}[label=(\roman*)]
\item 
The transition graph is strongly  {connected.}
\label{def30}
\item  \label{def3ii}
The model has $2N-1$ transition rate parameters (non-zero, non-diagonal elements of $\vect{\tilde Q}$).
\item  \label{def3iii}
The system made by the equations \eqref{vieta} and \eqref{evectors} or equivalently the system
made by the equations \eqref{vieta} and \eqref{eq:sym} or the system
\eqref{vieta},\eqref{eq:sym2}
has unique solutions up to transformations
by discrete symmetries, on a open domain of dimension $2N-1$. 
\end{enumerate}
\end{definition}
It is very difficult to obtain general sufficient conditions for solvability but we can state a necessary condition.  
\begin{proposition}\label{prop:p4}
A solvable model  necessarily satisfies $s=N$.  
\end{proposition}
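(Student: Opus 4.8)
The plan is to derive $s = N$ as a contradiction with solvability, specifically with part \ref{def3iii} of Definition~\ref{def:solvable}. The key observation is already laid out in the subsection on symmetrized systems: when $s < N$, the determinant $D_s(\lambda,\vect{k})$ that appears in Cramer's rule for $C_j$ (Lemma~\ref{kramer}) is one of the ``off-diagonal'' determinants $D_i$ with $i \neq s$ in the indexing there — wait, more carefully: in Lemma~\ref{kramer} the relevant determinant is $D_s$, obtained by deleting the $s$-th row and $s$-th column, so $\deg_\lambda D_s = N-1$ always. The subtlety is rather in equation \eqref{Sk}: there $D_s(\lambda_i,\vect{k})$ is the determinant obtained by deleting the $s$-th row and $i$-th column (this is the Cramer numerator for the component $u_N$, via \eqref{evectors}), and this has degree $N-1$ in $\lambda$ only when $i = s$, i.e.\ when the deleted column equals $N$, which happens precisely when $s = N$. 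So the proof hinges on the degree count in \eqref{Ds}--\eqref{DN}.

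**First I would** make precise the claim that in \eqref{Sk} the polynomial appearing is $D_N(\lambda_i, \vect{k})$ — the determinant of the $(N-1)\times(N-1)$ submatrix of $\lambda\vect{I} - \vect{\tilde Q}$ obtained by deleting row $s$ and column $N$ — because the factor $u_N(\lambda_i,\vect{k}) C_i$ in \eqref{evectors}, after substituting Lemma~\ref{kramer}'s expression $u_N = D_N/D_s$ and $C_i = D_s/\prod(\lambda_i - \lambda_j)$, becomes $D_N(\lambda_i,\vect{k})/\prod_{j\neq i}(\lambda_i - \lambda_j)$, with the $D_s$ cancelling. **Then** I would invoke the degree bound from \eqref{DN}: when $s < N$, the deleted column $N$ differs from the deleted row $s$, so $D_N$ is an off-diagonal minor, hence $\deg_\lambda D_N \le N-2$. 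Applying the Jacobi--Trudi identity \eqref{jacobi} with $k = 1$ and a polynomial of degree $\le N-2 < N-1$ in the $\lambda_i$'s forces $S_1 = \sum_i A_i \lambda_i = 0$ identically — this is exactly the computation already displayed in the subsection.

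**The conclusion** is then a dimension argument: the relation $S_1 = 0$ is a nontrivial algebraic relation among the parameters $\lambda_1,\ldots,\lambda_N, A_1,\ldots,A_{N-1}$. Hence the image of the map $\vect{k} \mapsto (\lambda, A)$ — or equivalently the locus where the inverse-problem system \eqref{vieta},\eqref{evectors} is consistent — lies inside the hypersurface $\{S_1 = 0\}$, which has dimension $2N-2 < 2N-1$. This contradicts requirement \ref{def3iii}, which demands that the system have (finitely many) solutions on an open domain of full dimension $2N-1$. Therefore a solvable model must have $s = N$. **The main obstacle** I anticipate is bookkeeping rather than conceptual: one must be careful about which row/column is deleted in the several determinants $D_i$ (the definition just before Lemma~\ref{kramer} deletes row $s$, while \eqref{Ds}--\eqref{DN} implicitly track whether the deleted column index coincides with $s$), and one must confirm that the degree-$(N-1)$ situation genuinely requires $i = s$ in \eqref{Sk} — i.e.\ that no cancellation or special structure of $\vect{\tilde Q}$ could unexpectedly raise the degree of an off-diagonal minor. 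Since $\lambda\vect{I} - \vect{\tilde Q}$ has $\lambda$ only on the diagonal, deleting row $s$ and column $N$ (with $s \neq N$) leaves at most $N-2$ diagonal entries of the original matrix in the submatrix, pinning the degree at $\le N-2$, so this step is clean once stated carefully.
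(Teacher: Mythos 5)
Your argument is correct and is essentially the paper's own: the paper gives no separate proof of this proposition but points to the case analysis in the subsection on symmetrized systems, where $s<N$ forces $\deg_\lambda D_N\le N-2$ and hence, via the Jacobi--Trudi identity \eqref{jacobi}, the constraint $S_1=\sum_i A_i\lambda_i=0$, which confines the data to a hypersurface of dimension $2N-2$ and so violates condition \ref{def3iii} of Definition~\ref{def:solvable}. You have in fact been slightly more careful than the source, correctly noting that the numerator in \eqref{Sk} should read $D_N$ rather than $D_s$ (the $D_s$ factors cancel between $u_N=D_N/D_s$ and $C_i$) and verifying the off-diagonal degree bound explicitly.
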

 {\begin{proof}
If $s\neq N$, then \eqref{constraint} holds, making the survival function parameters independent. The
inverse problem then involves at most $2N-2$ independent
equations for the $2N-1$ kinetic
parameters. 
\end{proof}
\begin{remark}
In Subsection~\ref{sec:noC1} we relax this necessary condition  to  $s \in \text{Pred}(N+1)$, where $\text{Pred}(N+1)$ is the set of predecessors of 
$N+1$ in the transition graph. 
\end{remark}
}

Ergodic models also satisfy the following property that has been used in condition \eqref{nondegeneracy} and 
is needed for writing the solution \eqref{solution}.
\begin{proposition}\label{nonzero_eigenvalues}
All ergodic models satisfy $\max_{1 \leq i \leq N}\lambda_i < 0$, where
$\lambda_i$ are the eigenvalues of $\vect{\tilde Q}(\vect{k})$.
\end{proposition}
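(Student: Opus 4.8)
The plan is to reinterpret $\vect{\tilde Q}^T$ probabilistically and then invoke a standard fact about matrix exponentials. Recall that $\vect{\tilde Q}^T$ is precisely the matrix obtained from the generator $\vect{Q}$ by deleting its $(N+1)$-th row and column, i.e.\ the \emph{sub-generator} of the continuous-time chain restricted to the states $\{1,\ldots,N\}$ and killed upon reaching the absorbing state $N+1$. Since $\vect{\tilde Q}$ and $\vect{\tilde Q}^T$ are transposes they share eigenvalues; moreover $\vect{\tilde Q}$ is a Metzler matrix (its off-diagonal entries are nonnegative), so by the Perron--Frobenius theory for Metzler matrices the quantity $\max_{1\le i\le N}\lambda_i$ occurring in the statement should be read as the spectral abscissa $\mu=\max_i\operatorname{Re}(\lambda_i)$, which is attained by a real eigenvalue. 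Thus it suffices to prove that every eigenvalue of $\vect{\tilde Q}^T$ has strictly negative real part.

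First I would record the elementary fact that if $e^{tA}\to 0$ entrywise as $t\to+\infty$ then every eigenvalue of $A$ has negative real part: if $A\vv=\lambda\vv$ with $\operatorname{Re}\lambda\ge 0$ then $\|e^{tA}\vv\|=e^{t\operatorname{Re}\lambda}\|\vv\|\ge\|\vv\|$ for all $t$, a contradiction. So the goal reduces to showing $e^{t\vect{\tilde Q}^T}\to 0$. For this I would use that the entries of this matrix exponential are $(e^{t\vect{\tilde Q}^T})_{ij}=\proba{M(t)=j,\ \tau>t\mid M(0)=i}$ for $1\le i,j\le N$, where $\tau=\inf\{t>0:M(t)=N+1\}$; in particular each entry is bounded above by $p_i(t):=\proba{\tau>t\mid M(0)=i}$, so it is enough to show $p_i(t)\to 0$, i.e.\ that the killed chain is absorbed at $N+1$ almost surely.

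Here is where strong ergodicity enters. If $G$ is strongly connected then from every state $i\in\{1,\ldots,N\}$ there is a directed path to $N+1$; truncating it at the first visit to $N+1$ gives a path using only the vertices $1,\ldots,N$ together with the final arc into $N+1$, hence one that is traced with positive probability by the killed chain within, say, one time unit. Taking the minimum over the finitely many starting states, $\varepsilon:=\min_{1\le i\le N}\proba{\tau\le 1\mid M(0)=i}>0$, and then a one-step Markov-property argument (conditioning on the transient state occupied at integer times) gives $\proba{\tau>n\mid M(0)=i}\le(1-\varepsilon)^n\to 0$, so $p_i(t)\to 0$. Chaining the three steps finishes the argument: $e^{t\vect{\tilde Q}^T}\to 0$, hence $\operatorname{Re}(\lambda_i)<0$ for all $i$, hence $\max_{1\le i\le N}\lambda_i=\mu<0$.

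I expect the only genuinely delicate point to be the bookkeeping in the third step: one must check that the path to $N+1$ furnished by strong connectivity can be taken inside the killed dynamics (using only arcs among $\{1,\ldots,N\}$ and the arc into $N+1$, never the ``return'' arc $N+1\to s$), which the truncation argument handles, and that the uniform lower bound $\varepsilon>0$ really follows from finiteness of the state space. A purely linear-algebraic alternative would be to observe that $-\vect{\tilde Q}^T$ is a $Z$-matrix for which the (finite, positive) vector $x$ of expected absorption times solves $\vect{\tilde Q}^T x=-\mathbf 1<0$, whence $-\vect{\tilde Q}^T$ is a nonsingular $M$-matrix and has spectrum in the open right half-plane; but this reformulation needs the same almost-sure-absorption fact as input, so I would present the probabilistic line as the main one.
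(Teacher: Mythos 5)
Your proof is correct, but it takes a genuinely different route from the paper's. The paper splits the claim in two: the earlier Remark disposes of positive eigenvalues by noting that they would produce unbounded solutions of the master equation, while the Proposition's proof only rules out a zero eigenvalue, by observing that a null vector of $\vect{\tilde Q}$ would have to be (proportional to) the stationary distribution of the reduced strongly ergodic chain obtained by setting $Q_{N,N+1}=0$ — which by Perron--Frobenius has all entries positive — yet the zero-row-sum computation forces its $N$-th entry to vanish. You instead give a single unified probabilistic argument: identify $\vect{\tilde Q}^T$ as the sub-generator of the killed chain, prove almost-sure absorption via strong connectivity and a geometric bound $\proba{\tau>n}\le(1-\varepsilon)^n$, conclude $e^{t\vect{\tilde Q}^T}\to 0$, and hence that every eigenvalue has strictly negative real part. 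Your approach buys several things the paper's does not: it handles complex eigenvalues explicitly (the paper writes $\max_i\lambda_i<0$ without saying what this means off the real axis, whereas you pin down the spectral abscissa via the Metzler structure), and it avoids the slightly delicate point in the paper's Remark that an eigenvector with $\lambda_i>0$ is not itself a probability vector, so "probabilities are bounded" requires an extra decomposition step to yield a contradiction. The paper's argument is shorter once one grants uniqueness and positivity of the stationary distribution of an ergodic chain, and it isolates exactly where strong ergodicity is used (ruling out $\lambda=0$); yours uses strong connectivity only to guarantee a path to the absorbing state from every transient state, which is in fact a weaker hypothesis than strong ergodicity — a small bonus in generality.
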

\begin{proof}
Suppose that some $\lambda_i=0$. Then 
we have 
$$
\vect{\tilde Q}(\vect{k}) \vect{u} (\lambda_i, \vect{k}) = 0.
$$
Summing all the equations of the above system term by term, we obtain
$
k_{2N-1} u_N (\lambda_i, \vect{k}) = 0.
$
This implies that $u_N (\lambda_i, \vect{k}) = 0$.
Therefore,
$\vect{u} (\lambda_i, \vect{k})$ also satisfies
$$
\vect{\tilde Q}^{\mathrm{red}} \vect{u} (\lambda_i, \vect{k}) = 0,
$$
where $\vect{\tilde Q}^{\mathrm{red}}$ is obtained from  
$\vect{\tilde Q}(\vect{k})$ by setting $k_{2N-1}=0$. 

Note that $\vect{\tilde Q}^{\mathrm{red}}$ satisfies the zero column-wise sum 
and is the dual generator of a reduced Markov chain obtained
by deleting the state $N+1$.
Therefore $\vect{u} (\lambda_i, \vect{k})/\sum_{i=1}^N u_i (\lambda_i, \vect{k})$ is a steady state probability distribution
of the reduced Markov chain with states $\{1,\ldots,N\}$ 
and dual generator $\vect{\tilde Q}^{\mathrm{red}}$.
 
The full model, and consequently the
reduced chain, are  ergodic. Or,  any  ergodic Markov chain has a unique 
steady state distribution in which all states have non-zero probabilities, which 
contradicts $u_N (\lambda_i, \vect{k}) = 0$.
\end{proof}

The following Proposition guarantees 
that the system of equations 
\eqref{evectors}, that are used to
define the 
inverse problem, can be obtained
for all ergodic models.
\begin{proposition}
Consider that $G$ is strongly connected.  Then
the equation $( \lambda \vect{I} - \vect{\tilde Q}(\vect{k})) \vect{u} =0$
has solutions
$\vect{u}(\lambda,\vect{k}) = (u_1(\lambda,\vect{k}), \dots, u_{N}(\lambda,\vect{k}))$ with $u_i(\lambda,\vect{k}) \neq 0$
for all $1\leq i \leq N$. 
\end{proposition}
\begin{proof}
With the notations of the  Subsection~\ref{sec:evectors}, the polynomial
$D_{ss}(\lambda, \vect{k})$ has leading term $\lambda^{N-1}$ and therefore is not identically zero (see \eqref{Ds}).   Lemma~\ref{kramer} implies that there are eigenvector solutions with $u_s(\lambda, \vect{k}) = 1$.  
Furthermore, for  $i\neq s$,  
$u_i(\lambda, \vect{k})$ are not identically zero
because, otherwise, $u_i(0, \vect{k})$ would also be zero. 
However, $\vect{u}(0, \vect{k})$ 
is an eigenvector of
$\vect{\tilde Q}(\vect{k})$ for the
eigenvalue $\lambda=0$, representing a steady state
of the Markov chain. Following the same argument as in the proof of  Proposition~\ref{nonzero_eigenvalues}, for ergodic chains,  
 $u_i(0, \vect{k}) \neq 0$ for all $1\leq i \leq N$.
\end{proof}


Figure~\ref{figure2} shows all the ergodic models with $N=3$ satisfying the conditions C1 and C2 and having $2N-1$ non-zero transition rate parameters. The numbering of the models stems from the fact that there are nine models that satisfy the conditions, and among them, only five are ergodic.
 
 {\subsection{Symmetrized equations and solvability without the condition C1.}\label{sec:noC1}
In this subsection, we lift the simplifying assumption that $N+1$
is reachable only from $N$ and
allow multiple states to lead to $N$.
The $2N-1$ independent kinetic parameters of the model are non-zero, non-diagonal elements of $\vect{Q}$.

Let $\text{Pred}(N+1)$ be the set of states that lead to $N+1$ (the predecessors of $N+1$ in the transition graph).

Then, \eqref{eq:probaevolutionN},
\eqref{Ai}, and \eqref{Sk}
become
$$\D{X_{N+1}}{t} = \sum_{j \in \text{Pred}(N+1)} Q_{j,N+1} X_j {(t)},$$
$$
 A_i = - \frac{C_i}{\lambda_i} \sum_{j \in \text{Pred}(N+1)}  Q_{j,N+1}  u_{i,j},
$$
$$
S_k =  \sum_{i=1}^N
 \frac{
 \sum_{j \in \text{Pred}(N+1)} (-1)^{|s-j|+1} Q_{j,N+1}  \lambda_i^{k-1}
 D_{sj}(\lambda_i,\vect{Q})
}{\prod_{1\le l\neq i\le N}(\lambda_i-\lambda_l)},
$$
where  $D_{sj}$ are defined as in Subsection~\ref{sec:sym}.

The Vieta's formulas  \eqref{vieta}, which provide the first $N$ equations of the inverse problem, remain unchanged. 
The remaining $N-1$ symmetrized equations resulting from the eigenvectors are now more complex.  

Like in Subsection~\ref{sec:sym}, we have two cases: 

\paragraph{
i) $s \notin \text{Pred}(N+1)$.}

In this case $S_1 = 0$, and the argument in Proposition~\ref{prop:p4} 
holds: the model is not solvable. 
The remaining $N-2$ symmetrized equations resulting from eigenvectors read 
\begin{eqnarray}\label{eq:sym2noC1}
 S_2 &=& 
 \sum_{j\in \text{Pred}(N+1)}
 (-1)^{|s-j|+1} Q_{j,N+1} d_{N,N-2}(\vect{Q}),   \notag \\
S_3 &=& 
\sum_{j\in \text{Pred}(N+1)}
(-1)^{|s-j|+1} Q_{j,N+1}(d_{N,N-2}(\vect{Q})h_1(\vect{\lambda}) + 
d_{N,N-3}(\vect{Q}) ),    \notag \\
 &\vdots& \notag \\
S_k &=& 
\sum_{j\in \text{Pred}(N+1)}
(-1)^{|s-j|+1} Q_{j,N+1}(d_{N,N-2}(\vect{Q})h_{k-2}(\vect{\lambda}) + d_{N,N-3}(\vect{Q})h_{k-3}(\vect{\lambda}) +\notag \\
&+&\ldots +
d_{N,N-k}(\vect{Q}) ) ,   \notag \\
  &\vdots& \notag \\
 S_{N-1} &=& 
 \sum_{j\in \text{Pred}(N+1)}
 (-1)^{|s-j|+1} Q_{j,N+1} ( d_{N,N-2}(\vect{Q})  h_{N-3}(\vect{\lambda}) + \ldots +
d_{N,2}(\vect{Q})  h_1(\vect{\lambda}) +\notag \\ 
&+&d_{N,1}(\vect{Q} )),  
\end{eqnarray}
where $d_{N,k}(\vect{Q}),\,0\leq  k \leq N-2$ 
are defined as in 
Subsection~\ref{sec:sym}.

\paragraph{
ii) $s \in \text{Pred}(N+1)$.}

In this case we have $N-1$ equations
resulting from the eigenvectors:
\begin{eqnarray}\label{eq:symnoC1}
 S_1 &=&  - Q_{s,N+1},   \notag \\
 S_2 &=&  - Q_{s,N+1} ( h_1(\vect{\lambda}) + c_{N-2}(\vect{Q} )) +\notag \\
&+& \sum_{j \in \text{Pred}(N+1), j\neq s}  (-1)^{|s-j|+1}Q_{j,N+1} d_{j,N-2}(\vect{Q}),   \notag \\
 &\vdots& \notag \\
 S_{N-1} &=&  
 - Q_{s,N+1} (  h_{N-2}(\vect{\lambda}) + c_{N-2}(\vect{Q}) h_{N-3}(\vect{\lambda}) + \ldots +
c_{2}(\vect{Q})  h_1(\vect{\lambda}) + c_{1}(\vect{Q} )) + \notag \\
&+&\sum_{j\in \text{Pred}(N+1),j\neq s}
(-1)^{|s-j|+1}Q_{j,N+1}
(d_{j,N-2}(\vect{Q})h_{N-3}(\vect{\lambda})+ 
d_{j,N-3}(\vect{Q})h_{N-4}(\vect{\lambda})+ \notag \\
&+&\ldots+d_{j,1}(\vect{Q})),  
\end{eqnarray}
where $c_{k}(\vect{Q}),\,0\leq  k \leq N-2$ 
are defined as in 
Subsection~\ref{sec:sym}.
}

\section{Solution of the inverse problem for the unbranched chain model}\label{vier}



\begin{figure}[h!]
\begin{center}
\scalebox{0.5}{
\begin{tikzpicture}
 \SetUpEdge[lw         = 0.5pt,
            color      = black,
            labelstyle = {sloped,scale=2}]
  \SetVertexMath
       \tikzset{VertexStyle/.style={scale=1, minimum size =8ex, 
       draw,
            shape = circle,
            line width = 1pt,
            color = black,
            outer sep=1pt}}
  \Vertex[x=0,y=2]{1}
  \Vertex[x=5,y=2]{2}
  \Vertex[x=10,y=2]{3}
  \Vertex[x=15,y=2]{N-1}
  \Vertex[x=20,y=2]{N}
  \Vertex[x=25,y=2]{N+1}
\tikzset{EdgeStyle/.style={post, bend right = 20,line width = 2.5}}
\Edge[label=$k_1^+$](1)(2)
\Edge[label=$k_1^-$](2)(1)
\Edge[label=$k_2^+$](2)(3)
\Edge[label=$k_2^-$](3)(2)
\Edge[label=$k_{N-1}^+$](N-1)(N)
\Edge[label=$k_{N-1}^-$](N)(N-1)
\Edge[label=$\dots$](3)(N-1)
\Edge[label=$\dots$](N-1)(3)
\tikzset{EdgeStyle/.style={post,line width = 2.5}}
\Edge[label=$k_N$](N)(N+1)
\tikzset{EdgeStyle/.style={dashed,post,bend right = 40,line width = 2.5}}
\Edge[label={\tiny return}](N+1)(N)
\end{tikzpicture}
}
\end{center}
\caption{\label{figure4} 
A solvable model 
 {with an arbitrarily large number of states}
: the unbranched chain model. For this model, the solution of the inverse problem can be computed symbolically for any number of states $N$. 
 }
\end{figure}

In this section, we present an example of a solvable model with an arbitrary number of states and propose a method to compute the solutions of the inverse problem.

Consider now a Markov chain with $N+1$ states arranged in a line and reversibly connected, as illustrated in Figure~\ref{figure4}. For this model 
$${\tilde Q}_{i+1,i}=k_i^+, {\tilde Q}_{i,i+1}=k_i^-, 1\le i<N, {\tilde Q}_{i,i}=-k_i^+ -k_{i-1}^-, 1\le i<N, {\tilde Q}_{N,N}=-k_N-k_{N-1}^-.$$
This model is a generalization, of arbitrary length $N$, of the model $M2$ represented in Figure~\ref{figure2} that has $N=3$.
To ensure that the model is solvable, as stated in  Proposition~\ref{prop:p4}, we consider the return state to be 
$s=N$.

The solution of the direct problem presented in the Section~\ref{direct_problem}
provides an algebraic map\footnote{although the rate parameters 
are real positive numbers, for formal reasons we define this map on complex numbers. 
} 
\begin{equation}\label{91}
f: \CC^{2N-1} \to \CC^{2N-1},\, f(k_1^+,\dots, k_{N-1}^+,k_1^-,\dots,k_{N-1}^-,k_N)=(A_1,\dots,A_{N-1},\lambda_1,\dots,\lambda_N).   
\end{equation}
The goal of this section is to prove that $f$ is invertible and that its inverse is a rational map. This also means that the unbranched chain model is solvable for any $N$. 
However, as we will see later, 
the inverse map is not a rational function 
for all solvable models; for instance inversion formulas may involve radicals. From this perspective, the unbranched chain model is special and, in some ways, simpler. 

Let $C:=(C_1,\dots,C_N)^T$ represent a vector,
where $C_i$ are solutions of \eqref{Csystem}. 

Note that \eqref{Csystem}, \eqref{eq:sym} imply that 
\begin{eqnarray}
\vect{U} C &=& (0,\ldots,0,1)^T ,\label{92bis} \\
k_N&=&-\sum_{1\le j\le N} A_j \lambda_j,
\label{92}
\end{eqnarray}
where $\vect{U}$  is the  $N\times N$ matrix with the columns $\vect{u}(\lambda_1,\vect{k}),\dots, \vect{u}(\lambda_N,\vect{k})$ and
$\lambda_1,\ldots,\lambda_N$ are the
eigenvalues of $\vect{\tilde Q}(\vect{k})$.

The following lemmas will provide an algorithmic approach to constructing the solution to the inverse problem.
\begin{lemma}\label{plus}
For a suitable rational function $G_{N-i}$ with rational coefficients and with a denominator $k_{N-i+1}^+\cdots k_{N-1}^+ k_{N-i}^-\cdots k_{N-1}^- k_N$ it holds
$$k_{N-i}^+=G_{N-i}(A_1,\dots,A_{N-1}, k_{N-i+1}^+,\dots, k_{N-1}^+ k_{N-i}^-,\dots, k_{N-1}^-, k_N, \lambda_1,\dots, \lambda_N),$$
for $1 \leq i \leq  N-1$.
\end{lemma}

\begin{proof}
For each eigenvalue $\lambda$ of the matrix 
$\bf \tilde Q(\vect{k})$ it holds
$$k_{N-1}^+u_{N-1}(\lambda,\vect{k})=k_N+k_{N-1}^-+\lambda,\, k_n^+u_n(\lambda,\vect{k})=(k_{n+1}^++k_n^-+\lambda)u_{n+1}(\lambda,\vect{k})-k_{n+1}^-u_{n+2}(\lambda,\vect{k}), $$
for $ 0\le n\le N-2$, considering that
$u_0(\lambda,\vect{k}) = 0$.
\noindent This provides by recursion on $N-n$ a rational function $g_n$ with rational coefficients such that
\begin{equation}\label{98}
k_n^+u_n(\lambda,\vect{k})=g_n(k_{n+1}^+,\dots, k_{N-1}^+, k_n^-,\dots, k_{N-1}^-, k_N, \lambda).    
\end{equation}
Moreover, the denominator of $g_n$ equals $k_{n+1}^+\cdots k_{N-1}^+$.

 Then for $i\ge 0$ it holds
\begin{equation}\label{99}
{\bf \tilde Q}^i UC=U\cdot \mathrm{diag} (\lambda_1^i,\dots, \lambda_N^i)C= {\bf \tilde Q}^i (0,\dots, 0, 1)^T,
\end{equation}
where $\mathrm{diag} (\lambda_1^i, \dots, \lambda_N^i)$ denotes a diagonal matrix.

The $(N-i)$-th coordinate of the middle vector in (\ref{99}) equals
\begin{equation}\label{97}
\sum_{1\le j\le N} u_{N-i} (\lambda_j,\vect{k})\lambda_j^i (-A_j\lambda_j/k_N).    
\end{equation}
The same $(N-i)$-th coordinate of the right vector in (\ref{99}) equals
\begin{equation}\label{96}
 k_{N-i}^- \cdots k_{N-1}^-.   
\end{equation}
Observe that the $j$-th coordinate of the right vector in (\ref{99}) vanishes for $1\le j<N-i$.

Multiplying  both sides of (\ref{98}) for $n=N-i, \lambda=\lambda_j$ by $\lambda_j^i(-A_j\lambda_j/k_N)$ and summing them up over $1\le j\le N$, we obtain that
\begin{eqnarray}\label{95}
k_{N-i}^+k_{N-i}^-\cdots k_{N-1}^-&=& G_{N-i,0} (A_1,\dots, A_{N-1},k_{N-i+1}^+,\dots, k_{N-1}^+, k_{N-i}^-,
\notag \\
&\dots&,
k_{N-1}^-, k_N, \lambda_1,\dots, \lambda_N)
\end{eqnarray}
for a suitable rational function $G_{N-i,0}$ with rational coefficients and with a denominator $$k_{N-i+1}^+\cdots k_{N-1}^+ k_N$$ taking into account the equality of (\ref{97}) and of (\ref{96}). 
\end{proof}



\begin{lemma}\label{minus}
For an appropriate rational function $P_{N-i}$ with rational coefficients and with a denominator $k_{N-i+1}^+ \cdots k_{N-1}^+ k_{N-i+1}^- \cdots k_{N-1}^- k_N$ it holds
$$k_{N-i}^-=P_{N-i}(A_1,\dots,A_{N-1}, k_{N-i+1}^+, \dots, k_{N-1}^+, k_{N-i+1}^-, \dots, k_{N-1}^-, k_N, \lambda_1,\dots, \lambda_N).$$
\end{lemma}

\begin{proof} The $(N-i+1)$-th coordinate of the middle vector in (\ref{99}) equals
\begin{equation}\label{94}
\sum_{1\le j\le N} u_{N-i+1} (\lambda_j,\vect{k}) \lambda_j^i (-A_j\lambda_j/k_N).    
\end{equation}
The same $(N-i+1)$-th coordinate of the right vector in (\ref{99}) equals
\begin{equation}\label{93}
-k_{N-i}^-\cdots k_{N-1}^- + p_{N-i}(k_{N-i+1}^+,\dots, k_{N-1}^+, k_{N-i+1}^-,\dots, k_{N-1}^-,k_N) \end{equation}
for an appropriate polynomial $p_{N-i}$ with integer coefficients.

Multiplying both sides of (\ref{98}) for $n=N-i+1, \lambda=\lambda_j$ by $\lambda_j^i (-A_j\lambda_j/k_N)$ and summing them up over $1\le j\le N$, we obtain that
$$-k_{N-i+1}^+k_{N-i}^- \cdots k_{N-1}^- +  h_{N-i}(k_{N-i+1}^+,\dots, k_{N-1}^+, k_{N-i+1}^-,\dots, k_{N-1}^-,k_N) =$$ $$P_{N-i,0} (A_1,\dots, A_{N-1}, k_{N-i+2}^+,\dots, k_{N-1}^+, k_{N-i+1}^-,\dots, k_{N-1}^-, k_N, \lambda_1,\dots, \lambda_N)$$
\noindent for a suitable rational function $P_{N-i,0}$ with rational coefficients and with a denominator $k_{N-i+2}^+ \cdots k_{N-1}^+ k_N$ taking into account the equality of (\ref{94}) and of (\ref{93}) (while the both latter multiplied by $k_{N-i+1}^+$). 
\end{proof}



Applying alternatingly Lemma~\ref{minus} and Lemma~\ref{plus} for $i=1,\dots, N-1$ consecutively, we conclude with the following main result of this section.

\begin{theorem}\label{interval}
There is an algorithm which produces rational functions $F_n^+, F_n^-, 1\le n\le N-1$ with rational coefficients such that 
$$k_n^+=F_n^+(A_1,\dots,A_{N-1}, \lambda_1,\dots, \lambda_N), k_n^-=F_n^-(A_1,\dots,A_{N-1}, \lambda_1,\dots, \lambda_N).$$
\end{theorem}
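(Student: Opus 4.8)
The plan is to prove Theorem~\ref{interval} by a downward recursion on the index $n$, in which Lemma~\ref{minus} and Lemma~\ref{plus} are applied alternately so as to peel off one transition rate at a time. The base of the recursion is already in hand: by~\eqref{92} we have the closed form
\[
k_N=-\sum_{1\le j\le N}A_j\lambda_j,
\]
which, after substituting $A_N=1-A_1-\cdots-A_{N-1}$ (from $\sum_i A_i=1$), is a polynomial — hence a rational function with rational coefficients — in $A_1,\dots,A_{N-1},\lambda_1,\dots,\lambda_N$. The inductive hypothesis I would maintain for $s=1,\dots,N-1$ is that $k_N$ together with $k_{N-1}^{\pm},\dots,k_{N-s+1}^{\pm}$ have all already been written as rational functions with rational coefficients of $A_1,\dots,A_{N-1},\lambda_1,\dots,\lambda_N$.

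For the inductive step at a given $s$, I would first invoke Lemma~\ref{minus}: it expresses $k_{N-s}^{-}$ as a rational function with rational coefficients of $A_1,\dots,A_{N-1}$, of the rates $k_{N-s+1}^{+},\dots,k_{N-1}^{+}$ and $k_{N-s+1}^{-},\dots,k_{N-1}^{-}$, of $k_N$, and of $\lambda_1,\dots,\lambda_N$. Every rate occurring as an argument here has index strictly larger than $N-s$, so by the inductive hypothesis each is already a rational function of the $A_i$ and $\lambda_i$; substituting those expressions and using that rational functions with rational coefficients are closed under composition gives $k_{N-s}^{-}=F_{N-s}^{-}(A_1,\dots,A_{N-1},\lambda_1,\dots,\lambda_N)$. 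Next I would invoke Lemma~\ref{plus}, which expresses $k_{N-s}^{+}$ as a rational function with rational coefficients of $A_1,\dots,A_{N-1}$, of $k_{N-s+1}^{+},\dots,k_{N-1}^{+}$, of $k_{N-s}^{-},\dots,k_{N-1}^{-}$, of $k_N$, and of $\lambda_1,\dots,\lambda_N$; note that $k_{N-s}^{-}$ itself now appears, but it has just been brought into the desired form, while all other rate arguments again have index $>N-s$. Substituting once more yields $k_{N-s}^{+}=F_{N-s}^{+}(A_1,\dots,A_{N-1},\lambda_1,\dots,\lambda_N)$, restoring the inductive hypothesis with $s$ increased by one. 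Running the recursion through $s=N-1$ produces all of $k_1^{\pm},\dots,k_{N-1}^{\pm}$ in the required form, which is exactly the assertion of the theorem.

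The one point that needs care — and the only genuine obstacle, since all the analytic work is already carried by Lemmas~\ref{plus} and~\ref{minus} — is to make sure each substitution is legitimate, i.e.\ that no denominator collapses to the zero rational function. By the explicit form recorded in the two lemmas, the denominator of each $G_{N-s}$, $D_{N-s}$ is a monomial in the rates $k_{N-s+1}^{+},\dots,k_{N-1}^{+}$ and $k_{N-s}^{-},\dots,k_{N-1}^{-}$ (respectively $k_{N-s+1}^{-},\dots,k_{N-1}^{-}$) and in $k_N$; on the open domain where $f=f_N$ of~\eqref{91} is being inverted — distinct negative eigenvalues and nonvanishing (in applications, positive) transition rates — these quantities are nonzero, so the composed expressions are well defined there. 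Equivalently, all the identities can be read as identities in the field $\QQ(A_1,\dots,A_{N-1},\lambda_1,\dots,\lambda_N)$ once one checks that no denominator is the zero element, the exceptional locus then being a proper Zariski-closed subset of $\CC^{2N-1}$. Finally, the functions $F_n^{\pm}$ together with the closed form for $k_N$ constitute a rational left inverse of $f$, so $f$ is injective with rational inverse on this open domain; this is the goal announced at the beginning of the section and shows that the unbranched chain model is solvable for every $N$.
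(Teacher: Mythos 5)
Your proposal is correct and follows essentially the same route as the paper: the paper's proof is exactly the alternating application of Lemma~\ref{minus} and then Lemma~\ref{plus} for $s=1,\dots,N-1$, seeded by the expression for $k_N$ in \eqref{92}, with rational functions composed at each step. Your additional care about non-vanishing denominators matches the paper's subsequent remark restricting to the open dense subset where $k_1^+\cdots k_{N-1}^+\,k_1^-\cdots k_{N-1}^-\,k_N\neq 0$ and the $\lambda_i$ are distinct.
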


\begin{remark}
\begin{enumerate}[label=(\roman*)]
    \item Together with (\ref{92}) Theorem~\ref{interval} assures the inverse rational map to $f$ (see (\ref{91})) on the open dense subset of $\CC^{2N-1}$ determined by conditions $k_1^+\cdots k_{N-1}^+ \cdot k_1^-\cdots k_{N-1}^- \cdot k_N\neq 0$ and $\lambda_i \neq \lambda_j, 1\le i< j\le N$;
\item the proof of Theorem~\ref{interval} provides an algorithm which produces explicitly rational functions $F_n^+, F_n^-$ by recursion on $N-n$. 
\end{enumerate}
\end{remark}

\begin{remark}
In fact, for any model (not  {necessarily}, the unbranched chain model elaborated in this section) one can consider  {a rational map
similar to the one defined by} (\ref{91}) . Thus, $A_1,\dots,A_{N-1},\lambda_1,\dots,\lambda_{N-1}$ are rational functions in $k_n^+, k_n^-, 1\le n<N, k_N$. Therefore, there is a field extension
$$\CC(A_1,\dots,A_{N-1},\lambda_1,\dots,\lambda_{N-1}) \subset \CC(k_1^+,\dots,k_{n-1}^+, k_1^-,\dots, k_{n-1}^-, k_N)$$
\noindent where $\CC(A_1,\dots,A_{N-1},\lambda_1,\dots,\lambda_{N-1})$ denotes the field generated by $A_1,\dots,A_{N-1},\lambda_1,\dots,\lambda_{N-1}$ over the field of complex numbers. It is known (see e.g. \cite{Shafarevich}, Ch. 1) that the degree of this extension equals the number of solutions in $\CC^{2N-1}$ of the system of rational equations $A_1=\alpha_1,\dots, A_{N-1}=\alpha_{N-1}, \lambda_1=\beta_1,\dots, \lambda_N=\beta_N$ at a generic point $(\alpha_1,\dots,\alpha_{N-1},\beta_1,\dots,\beta_N)\in \CC^{2N-1}$.  Recall that the degree is defined as the dimension of the vector space $\CC(k_1^+,\dots,k_{n-1}^+, k_1^-,\dots, k_{n-1}^-, k_N)$ over the field $\CC(A_1,\dots,A_{N-1},\lambda_1,\dots,\lambda_{N-1})$. The degree can be infinite. Theorem~\ref{interval} states that for the unbranched chain model the degree equals 1. 
This provides a rigorous algebraic reformulation of the statement that 
the inverse problem is in some ways simpler for the unbranched chain model.

We conjecture that for all other models the degree is greater than 1.
\end{remark}

\section{Using the Thomas decomposition for solving the inverse problem}

Thomas decomposition is a computer algebra algorithm that decomposes systems of polynomial equations and inequations into simpler systems. These can be more easily tackled by iteratively  solving univariate polynomial equations. We apply this technique to compute the solutions of the inverse problem. The method is illustrated by solving the inverse problem for all ergodic models with 
$N=3$ but can also be applied to larger 
$N$. Additionally, the method helps determine whether a model is solvable.


\subsection{The Thomas decomposition of an algebraic system}
In this section we introduce briefly the notion of the algebraic Thomas decomposition (see the appendix of \cite{LANGEHEGERMANN2021102266} for a similar introduction). We will then apply the algebraic Thomas decomposition in the subsequent sections to our symmetrized systems to determine their solutions. 

Let $\CC [\bbx] $ be a polynomial ring in $n$ variables $\bbx =(x_1,\dots,x_n)$ over the complex numbers $\CC$. An algebraic system $\algsystem$ is defined as a finite set of polynomial equations and inequations, that is as the set  
\begin{equation}\label{eqn:algebraicsystem}
\algsystem = \{ p_1(\bbx )=0, \, \dots, \, p_r(\bbx) =0, \,  q_1(\bbx) \neq 0 , \, \dots, \, q_s(\bbx) \neq 0\} 
\end{equation}
with polynomials $p_i(\bbx)$, $q_j(\bbx)$ in $\CC [\bbx]$ and integers $r$, $s \in \NN_0$. The solution set $\mathrm{Sol}(\algsystem)$ of the algebraic system \eqref{eqn:algebraicsystem} is defined as the set of all $\overline{\bbx} =(\overline{x}_1,\dots, \overline{x}_n) \in \CC^n$ satisfying the equations and inequations of $\algsystem$, that is as
\[
\mathrm{Sol}(\algsystem) = \{ \overline{\bbx} \in \CC^n \mid p_i(\overline{\bbx})=0, \, q_j(\overline{\bbx}) \neq 0 \ \mathrm{for} \ \mathrm{all} \ 1 \leq i \leq r \ \mathrm{and} \ 1 \leq j \leq s  \} .
\]
To fix ideas, we will use terminology from algebraic geometry, but this can be skipped without any loss by non-specialist readers. In algebraic geometry, Zariski closed sets, also called varieties, are sets of solutions of
systems of polynomial equations.
Zariski open sets are complements of closed sets (thus sets of solutions of systems of polynomial inequations) and
Zariski locally closed sets are
intersections between open and
closed sets.

Geometrically, $\mathrm{Sol}(\algsystem)$ is the difference of the two varieties 
\[
\{ \overline{\bbx} \in \CC^n \mid p_1(\overline{\bbx})=0, \, \dots, \, p_r(\overline{\bbx})=0  \} \ \mathrm{and} \ \{ \overline{\bbx} \in \CC^n \mid q_1(\overline{\bbx}) \cdots q_s(\overline{\bbx}) =0  \}
\]
and so it is a locally Zariski closed subset of $\CC^n$. 

In order to introduce the notion of an algebraic Thomas decomposition of a system $\algsystem$ we make the following definitions.
On the variables $\bbx=(x_1,\dots,x_n)$ of our polynomial ring $\CC [\bbx]$ we define a total ordering (sometimes also called a ranking) by setting 
$x_i < x_j$ for $i<j$. With respect to this ranking the leader $\mathrm{ld}(p(\bbx))$ of a non-constant polynomial $p(\bbx)$ is defined as the greatest variable appearing in $p(\bbx)$. In case $p(\bbx) \in  \CC$ is a constant polynomial, we set $\mathrm{ld}(p(\bbx))=1$. If we consider every polynomial $p(\bbx) \in \CC[\bbx]$ as a univariate polynomial in its leader, say $\mathrm{ld}(p(\bbx)) = x_k$, then the coefficients of $p(\bbx)$ as a polynomial in $x_k$ are polynomials in $\CC [ x_1,\dots, x_{k-1} ]$. The coefficient of the highest power of $\mathrm{ld}(p(\bbx))$ in $p(\bbx)$ is called the initial of $p(\bbx)$ which we denote by $\mathrm{init}(p(\bbx))$. The separant $\mathrm{sep}(p(\bbx))$ of a polynomial $p(\bbx)$ is defined as the partial derivative of $p(\bbx)$ with respect to its leader. 

\begin{definition}\label{def:simplesystem}
Let $\algsystem$ be the algebraic system of \eqref{eqn:algebraicsystem}. Then $\algsystem$ is called a simple algebraic system with respect to a ranking, if the following conditions are satisfied:
\begin{enumerate}
    \item The leaders of all equations and inequations are pairwise different, i.e.~we have 
    $$ \mathrm{card} \big(\{ \leader(p_1(\bbx)), \, \dots, \leader(p_r(\bbx)), \, \leader(q_1(\bbx)), \, \dots, \, \leader(q_s(\bbx)) \} \setminus \{1\} \big) =r+s   .
    $$
    This property is called triangularity.
    \item \label{def:simplesystem2} For every $p(\bbx) \in \{ p_1(\bbx), \, \dots, \, p_r(\bbx), \, q_1(\bbx), \, \dots, \, q_s(\bbx) \}$ the equation $\init(p(\bbx))=0$ has no solution in $\solset(\algsystem)$. We call this property non-vanishing initials.
    \item  For every $p(\bbx) \in \{ p_1(\bbx),\, \dots, \, p_r(\bbx), \, q_1(\bbx),\, \dots, \, q_s(\bbx) \}$ the equation $\mathrm{sep}(p(\bbx))=0$ has no solution in $\solset(\algsystem)$. This is called square-freeness.
\end{enumerate}
\end{definition}
The advantage of a simple algebraic system $\algsystem$ is that one can obtain its solution set by iteratively solving univariate polynomials. This is a consequence of the triangularity of a simple algebraic system. Indeed, the triangularity implies that there is at most one equation $p(\bbx)=0$ with leader $x_1$ or at most one inequation $q(\bbx)\neq 0$ with leader $x_1$. Note that $p(\bbx)$ or respectively $q(\bbx)$ is a univariate polynomial in $x_1$. The square-freeness implies that the number of zeros in $\CC$ of $p(\bbx)$ (respectively $q(\bbx)$) is equal the degree of $p(\bbx)$ (respectively $q(\bbx)$). In case of the equation $p(\bbx)=0$, any root $\overline{x}_1 \in \CC$ of 
$p(\bbx)$ can be chosen as the first coordinate of a solution $\overline{\bbx}$ of $\algsystem$. In case of the inequation $q(\bbx)\neq 0$, all elements of $\CC$ except for the roots of $q(\bbx)$ can here be chosen as the first coordinate of a solution. If there is no equation or inequation with leader $x_1$, then the first coordinate is free, that is $\overline{x}_1$ can be chosen arbitrary in $\CC$. Now we make the first iteration step. Again by triangularity there is at most one equation or inequation with leader $x_2$. If there is an equation or inequation with leader $x_2$, then we substitute $\overline{x}_1$ for $x_1$ in this equation or inequation and obtain so a univariate polynomial in $x_2$.  Condition \ref{def:simplesystem2} of Definition \ref{def:simplesystem} guarantees that the degree of the so obtained polynomial is independent of the choice of $\overline{x}_1$ and the square-freeness implies that the number of roots of this polynomial is equal to its degree. According to the three possible cases, that is there is an equation, inequation or neither of them, we determine as described above $\overline{x}_2 \in \CC$ for the second coordinate of a solution of $\algsystem$. An iteration of this process yields successively a solution $\overline{\bbx}=(\overline{x}_1,\dots,\overline{x}_n) \in \CC^n$ of $\algsystem$. Moreover, any solution of the simple algebraic system $\algsystem$ can be obtained by this process.    
\begin{definition}
    A Thomas decomposition of an algebraic system $\algsystem$ as in \eqref{eqn:algebraicsystem} consists of finitely many simple algebraic systems 
    $\algsystem_1,\dots , \algsystem_m$ such that $\solset(\algsystem)$ is the disjoint union of $\solset(\algsystem_1),\dots, \solset(\algsystem_m)$.
\end{definition}
It was proved by Thomas in \cite{th:sr,th:ds} that any algebraic system has a Thomas decomposition which is in general not unique. A Thomas decomposition can be determined algorithmically (see \cite{bglr:thomasalg}) and there is an implementation in \textsc{Maple}. 
A description of the implementation can be found in \cite{bl:thomasimpl,glhr:tdds}.

We will compute in the subsequent sections a Thomas decomposition of the symmetrized systems $M2$, $M3$, $M4$, $M8$ and $M9$ ($N=3$) by applying the \textsc{Maple} implementation to them. To this end we need to define a ranking on the polynomial ring 
\[
\CC [k_1,k_2,k_3,k_4,k_5,L_1,L_2,L_3,S_1,S_2].
\]
To simplify notation we collect the variables into $\bbk=(k_1,k_2,k_3,k_4,k_5)$ and $\bbv=(L_1,L_2,L_3,S_1,S_2)$. 
Since we want to solve the symmetrized systems for the variables $\bbk$, we rank them always higher than the collection of variables $\bbv$. Among the variables $\bbk$ we sometimes change the ranking for the different symmetrized systems. This is done to minimize the output of the Thomas decomposition and has no deeper meeting. The ranking of the variables 
$\bbk > \bbv$ implies that each simple algebraic system returned by the Thomas decomposition yields solutions for the variables $\bbk$ which are only valid for solutions of the variables $\bbv$ satisfying equation and inequation conditions in $\bbv$ over $\CC$.

\subsection{A Thomas decomposition for model 2}

The model studied here is a particular case of the one from Section~\ref{vier} and Theorem~\ref{interval}. We are going to determine the solutions of the algebraic system of equations 
\[
\begin{array}{rcl}
\algsystem &=& \big\{ -S_1 = k_5, \
-S_2 = k_5 \, (L_1 + k_1 + k_2 + k_3), \
L_1  = -k_1 - k_2 - k_3 - k_4 - k_5,   \\[0.5em] &&  \   
L_2  = k_1 \,k_4 + k_1 \, k_5 + k_2 \, k_3 + k_2 \, k_5 + k_3 \, k_4 + k_3 \,k_5, \
L_3 = -k_2 \, k_3 \, k_5  \big\} .
\end{array}
\]
To this end we compute with \textsc{Maple} an algebraic Thomas decomposition of $\algsystem$ with respect to the ranking 
$$k_1 > k_3 > k_2 > k_4 > k_5 >\bbv .$$ 
\textsc{Maple} returns eleven simple systems $\algsystem_1, \dots, \algsystem_{11}$. We will only compute here the solutions in $\bbk$ for the first simple systems, since it the most generic one, meaning that the solutions are valid for specializations of the variables $\bbv$ satisfying only inequations, i.e~the solutions for $\bbk$ are valid over an Zariski open subset of $\CC^5$. The first simple system consists of the equations and inequations
  \[
  \begin{array}{rcl}
 \algsystem_1 &=& 
 \big\{ L_1^2 \, S_1^3 \, S_2 + L_2^2 \, S_1^3 + S_1 \, S_2^3 + L_3^2 \, S_1 + (-S_1^2 \, S_2^2 + S_2^3 + 
       (S_1^3 \, S_2 - S_1 \, S_2^2) \, L_1  \\[0.5em] && \
       + (-S_1^4 + S_1^2 \, S_2) \, L_2 + (S_1^3 - S_1 \, S_2) \, L_3) \, k_1 + (-2 \, S_1^2 \, S_2^2 + (-S_1^4 - S_1^2 \, S_2) \, L_2  \\[0.5em] && \
       + (S_1^3 + S_1 \, S_2) \, L_3) \, L_1 + (S_1^3 \, S_2 - 2 \, L_3 \, S_1^2 + S_1 \, S_2^2) \, L_2 + (S_1^4 - 3 \, S_1^2 \, S_2) \, L_3 = 0,\\[0.5em] && \
        (L_1 \, S_1 \, S_2 - L_2 \, S_1^2 + L_3 \, S_1 - S_2^2) \, k_3 + (-S_1^2 + S_2) \, L_3 = 0, \\[0.5em] && \
        -L_1 \, S_1 \, S_2 + L_2 \, S_1^2 - L_3 \, S_1 + S_2^2 + (S_1^3 - S_1 \, S_2) \, k_2 = 0, \ 
        k_4 \, S_1 - S_1^2 + S_2 = 0, \\[0.5em] &&  \
        k_5 + S_1 = 0, \ 
        L_1 \, S_1 \, S_2 - L_2 \, S_1^2 + L_3 \, S_1 - S_2^2 \neq 0,  \
        S_1^3 - S_1 \, S_2 \neq 0, \
        S_2 \neq 0 \big\}.
  \end{array}
\]
One easily checks that the last three inequations do not involve any variable $\bbk$. Thus the inequations 
\begin{equation}\label{eqn:model2_simple1_cond}    
L_1 \, S_1 \, S_2 - L_2 \, S_1^2 + L_3 \, S_1 - S_2^2 \neq 0,  \quad S_1^3 - S_1 \, S_2 \neq 0, \quad S_2 \neq 0 
\end{equation}
define the above described Zariski open subset. Solving now successively the remaining equations for the variables $\bbk$ (they are all linear in their respective leaders), we obtain the solutions
  \begin{align*}
 k_1 &= \frac{-1}{(-S_1^2 \, S_2^2 + S_2^3 + 
       (S_1^3 \, S_2 - S_1 \, S_2^2) \, L_1  
       + (-S_1^4 + S_1^2 \, S_2) \, L_2 + (S_1^3 - S_1 \, S_2) \, L_3)}\\
       & (L_1^2 \, S_1^3 \, S_2 + L_2^2 \, S_1^3 + S_1 \, S_2^3 + L_3^2 \, S_1 + (-2 \, S_1^2 \, S_2^2 + (-S_1^4 - S_1^2 \, S_2) \, L_2 + 
      (S_1^3 + S_1 \, S_2) \, L_3) \, L_1  \\
      & +(S_1^3 \, S_2 - 2 \, L_3 \, S_1^2 + S_1 \, S_2^2) \, L_2  + (S_1^4 - 3 \, S_1^2 \, S_2) \, L_3),\\
       k_3 & = \frac{ (S_1^2 - S_2) \, L_3}{ (L_1 \, S_1 \, S_2 - L_2 \, S_1^2 + L_3 \, S_1 - S_2^2)}, \\
         k_2 &= \frac{L_1 \, S_1 \, S_2 - L_2 \, S_1^2 + L_3 \, S_1 - S_2^2}{S_1^3 - S_1 \, S_2}, \\
        k_4  & = \frac{S_1^2 - S_2}{ S_1}, \\
        k_5 & = - S_1, 
  \end{align*}
which are only valid for those $\overline{\bbv} \in \CC^5$ satisfying the inequations in \eqref{eqn:model2_simple1_cond}. If one is now interested in a solutions for $\bbk$ with respect to a $\overline{\bbv} \in \CC^5$ which does not satisfy the inequations of \eqref{eqn:model2_simple1_cond}, then there is a simple system among $\algsystem_2, \dots, \algsystem_{11}$, which one can solve successively for $\bbk$ as described in the previous subsection. The remaining simple systems $\algsystem_2, \dots, \algsystem_{11}$ can be found in subsection \ref{appendixmodel2} of the appendix. 

The conditions \eqref{eqn:model2_simple1_cond} do not contain equalities,
therefore the inverse problem has solutions on an open domain of dimension 5. According to the Section~\ref{sec:solv} this means that
the model $M2$ is solvable.

\subsection{A Thomas decomposition for model 3} 
The symmetrized algebraic system for model 3 is 
 \[
\begin{array}{rcl}
\algsystem &=& \big\{ 
-S_1 = k_5, \ 
-S_2 = k_5 \, (L_1 + k_1 + k_2 + k_3), \  
L_1  = -k_1 - k_2 - k_3 - k_4 - k_5,  \\[0.5em]
&& \ L_3  = -k_2 \, k_3 \, k_5, \ L_2  = k_1 \, k_4 + k_1 \, k_5 + k_2 \,k_3 + k_2\,k_4 + k_2\,k_5 + k_3\,k_4 + k_3\,k_5 \big\}
\end{array} 
\]
and we compute an algebraic Thomas decomposition for it with respect to the ranking 
$$k_1> k_2> k_3 > k_4 > k_5 > \bbv .$$
\textsc{Maple} returns $10$ simple systems $\algsystem_1,\dots ,\algsystem_{10}$. One easily checks by comparing the number of equations only involving the variables $\bbv$ (see appendix \ref{appendixmodel3} for the remaining simple systems $\algsystem_2,\dots ,\algsystem_{10}$), that the first simple system
\[
\begin{array}{rcl}
\mathcal{S}_1 &=& \big\{ k_1 \, k_3 \, S_1 \, S_2 + k_3^2 \, S_1 \, S_2 + L_3 \, S_2 + (L_2 \, S_1^2 - L_3 \, S_1) \, k_3 = 0, \ k_2 \, k_3 \, S_1 - L_3 = 0, \ k_3 \neq 0, \\[0.5em]
 & & \  k_4 \, S_1 - S_1^2 + S_2 = 0, \ k_5 + S_1 = 0, \ L_1 \, S_1 \, S_2 - L_2 \, S_1^2 + L_3 \, S_1 - S_2^2 = 0, \ S_1 \neq 0, \ S_2 \neq 0 \big\}
\end{array}
\]
is the most generic one. Here, even in the most generic case, the solutions for $\bbk$ are only valid for $\overline{\bbv} \in \CC^5$ lying on a Zariski locally closed subset defined by
\begin{equation}\label{eqn:model3_simple1_cond}
 L_1 \, S_1 \, S_2 - L_2 \, S_1^2 + L_3 \, S_1 - S_2^2 = 0, \ S_1 \neq 0, \, S_2 \neq 0 .
 \end{equation}
Solving again the remaining equations successively for $k_5$, $k_4$, $k_3$, $k_2$, $k_1$, we obtain the solutions 
\begin{align*}
    k_5 &= -S_1 \\
    k_4 &= \frac{S_1^2-S_2}{S_1} \\
    k_3 & \quad \mathrm{arbitrary} \, \mathrm{in} \, \CC \setminus \{ 0 \}, \\
    k_2 & = \frac{L_3}{k_3 \, S_1}, \\
    k_1 & = \frac{-1}{k_3 \, S_1 \, S_2} (k_3^2 \, S_1 \, S_2 + L_3 \, S_2 + (L_2 \, S_1^2 - L_3 \, S_1) \, k_3),
\end{align*}
where the expression for $k_2$ and $k_1$ depend on the choice made for $k_3$.

The conditions \eqref{eqn:model3_simple1_cond} contain one equality,
therefore the inverse problem has solutions on an open domain of dimension 4.
Furthermore, the set of solutions is infinite. 
According to the Section~\ref{sec:solv} this means that
the model $M3$ is not solvable.

\subsection{A Thomas decomposition for model 4}
In case of model 4 the symmetrized algebraic system is 
\[
\begin{array}{rcl}
\algsystem &=& \big\{ 
-S_1 = k_5, \ 
-S_2 = k_5 \, (k_1 + k_2 + k_3 + L_1), \ 
L_1 = -k_1 - k_2 - k_3 - k_4 - k_5, \\[0.5em]
&& \  L_2 = k_1 \, k_3 + k_1 \, k_4 + k_1 \, k_5 + k_2 \, k_3 + k_2 \, k_5 + k_3 \, k_4 + k_3 \, k_5, \ 
L_3 = -k_1 \, k_3 \, k_5 - k_2 \, k_3 \, k_5
\big\} .
\end{array}
\]
Using the ranking
$$k_1 > k_2> k_3> k_4> k_5> \bbv$$ 
the \textsc{Maple} implementation of algebraic Thomas decomposition returns 21 simple systems (see subsection \ref{appendixmodel4} of the appendix for the systems $\algsystem_2, \dots, \algsystem_{21}$). The first simple
system 
\[
\begin{array}{rcl}
\mathcal{S}_1 &=& \big\{ L_1 \, S_1^2 - L_2 \, S_1 - S_1 \, S_2 + (S_1^2 - S_2) \, k_1 + (S_1^2 - S_2) \, k_3 + L_3 = 0, \\[0.5em] 
&& \ L_2 \, S_1^2 - L_1 \, S_1 \, S_2 - L_3 \, S_1 + S_2^2 + (S_1^3 - S_1 \, S_2) \, k_2 = 0, \ k_3^2 \, S_1 + (L_1 \, S_1 - S_2) \, k_3 + L_3 = 0, \\[0.5em] && \ k_4 \, S_1 - S_1^2 + S_2 = 0, \ k_5 + S_1 = 0, \ L_1^2 \, S_1^2 - 2 \, L_1 \, S_1 \, S_2 - 4 \, L_3 \, S_1 + S_2^2 \neq 0, \ L_3 \neq 0, \\[0.5em] && \ S_1^3 - S_1 \, S_2 \neq 0, \ S_2  \neq 0 \big\}.
\end{array}
\]
is the most generic one. Indeed, all inequations appearing in $\algsystem_1$ involve only the variables $\bbv$, namely 
\[
L_1^2 \, S_1^2 - 2 \, L_1 \, S_1 \, S_2 - 4 \, L_3 \, S_1 + S_2^2  \neq 0, \ L_3 \neq 0, \ S_1^3 - S_1 \, S_2  \neq 0, \ S_2  \neq 0, 
\]
and so the solutions for $\bbk$ are valid for all $\overline{\bbv} \in \CC^5$ of the Zariski open subset defined by these inequations.
We can now successively solve the remaining equations in $\algsystem_1$ for the variables $k_5$, $k_4$, $k_3$, $k_2$ and $k_1$, where the equations for $k_5$, $k_4$, $k_2$, $k_1$ are all linear in their respective leaders except for the equation with leader $k_3$, 
$$ k_3^2 \, S_1 + (L_1 \, S_1 - S_2) \, k_3 + L_3 = 0,$$ 
which is quadratic. Since we want to consider only real roots for $k_3$, we require that the discriminant is positive, that is we change the inequation $L_1^2 S_1^2 - 2 L_1 S_1 S_2 - 4 L_3 S_1 + S_2^2  \neq 0$ into $L_1^2 S_1^2 - 2 L_1 S_1 S_2 - 4 L_3 S_1 + S_2^2  > 0$. 
Solving successively the equations with leader $k_5$, $k_4$, $k_3$, $k_2$, $k_1$ using for $k_3$ the quadratic formula, we obtain the solutions
\begin{align*}
    k_5 &= -S_1 ,\\
    k_4 & =\frac{S_1^2-S_2}{S_1}, \\
    k_3^{1,2} &= - \frac{L_1 \, S_1- S_2 \pm \sqrt{L_1^2 \, S_1^2 - 2 \, L_1 \, S_1 \, S_2 - 4 \, L_3 \, S_1 + S_2^2} }{2 \, S_1}, \\
    k_2 &= \frac{L_1 \, S_1 \, S_2 - L_2 \, S_1^2 + L_3 \, S_1 - S_2^2 }{S_1^3 - S_1 \, S_2} ,\\
    k_1 &=  \frac{-L_1 \, S_1^2 + L_2 \, S_1 + S_1 \, S_2  - (S_1^2 - S_2) \, k_3^{1,2} - L_3 }{S_1^2 - S_2}
\end{align*}
subject to the conditions 
\begin{equation}\label{model4conditions}
L_1^2 \, S_1^2 - 2 \, L_1 \, S_1 \, S_2 - 4 \, L_3 \, S_1 + S_2^2  > 0, \ L_3 \neq 0, \ S_1^3 - S_1 \, S_2 \neq 0, \ S_2 \neq 0. 
\end{equation}
Note that the solution for $k_1$ depends on the choice of the root $k_3^{1,2}$.

The generic simple system provides a finite set of solutions of the inverse problem on the open domain of dimension 5
defined by \eqref{model4conditions}. 
According to the Section~\ref{sec:solv} this means that
the model $M4$ is solvable. 



\subsection{A Thomas decomposition for model 8}
In case of model 8 the symmetrized algebraic system is 
\[
\begin{array}{rcl}
\algsystem &=& \big\{ 
 -S_1 = k_5, \ -S_2 = k_5 \, (k_1 + k_2 + L_1), \ L_1 = -k_1 - k_2 - k_3 - k_4 - k_5,  \\[0.5em]
 && \ L_3 = -k_1 \, k_2 \, k_5, \ L_2 = k_1 \, k_2 + k_1 \, k_3 + k_1 \, k_4 + k_1 \, k_5 + k_2 \, k_3 + k_2 \, k_5 \big\} . 
\end{array}
\]
We compute the algebraic Thomas decomposition of $\algsystem$ with respect to the ranking 
$$k_1 > k_3 > k_4 > k_2 > k_5 > \bbv $$ 
and obtain from \textsc{Maple} twelve simple systems $\algsystem_1,\dots, \algsystem_{12}$. One easily checks by comparing the number of equations which have leader one of the variables of $\bbv$, that the first simple system  
\[
\begin{array}{rcl}
\mathcal{S}_1 &=&  \big\{ S_1 \, k_1 \, k_2 - L_3 = 0, \ k_3 \, k_2 \, S_1^2 + L_1 \, S_1 \, S_2 - L_2 \, S_1^2 + L_3 \, S_1 - S_2^2 + (-S_1^3 + S_1 \, S_2) \, k_2 = 0, \\[0.5em] && \ S_1^2 \, k_2 \, k_4 - L_1 \, S_1 \, S_2 + L_2 \, S_1^2 - L_3 \, S_1 + S_2^2 = 0, \ k_2^2 \, S_1 + (L_1 \, S_1 - S_2) \, k_2 + L_3 = 0, \\[0.5em] && \ k_5 + S_1 = 0, \ L_1^2 \, S_1^2 - 2 \, L_1 \, S_1 \, S_2 - 4 \, L_3 \, S_1 + S_2^2  \neq 0, \ L_3  \neq 0, \ S_1  \neq 0 \big\} 
\end{array}
\]
is the most generic one. Analogously as in case of model 4 one determines the solutions 
\begin{align*}
    k_5 &= -S_1, \\
    k_2^{1,2} &= -\frac{L_1 \, S_1- S_2 \pm \sqrt{L_1^2 \, S_1^2 - 2 \, L_1 \, S_1 \, S_2 - 4 \, L_3 \, S_1 + S_2^2} }{2 \,S_1}, \\
    k_4 &= \frac{L_1 \, S_1 \, S_2 - L_2 \, S_1^2 + L_3 \, S_1 - S_2^2}{ k_2^{1,2} \, S_1^2}, \\
    k_3 &= - \frac{-S_1^3 \, k_2^{1,2} + L_1 \, S_1 \, S_2 - L_2 \, S_1^2 + S_1 \, S_2 \, k_2^{1,2} + L_3 \, S_1 - S_2^2}{k_2^{1,2} \, S_1^2}, \\
    k_1 &= \frac{L_3}{S_1 \, k_2^{1,2}}
\end{align*}
of $\algsystem_1$ subject to the conditions 
\begin{equation}\label{model8conditions}
L_1^2 \, S_1^2 - 2 \, L_1 \, S_1 \, S_2 - 4 \, L_3 \, S_1 + S_2^2  > 0, \ L_3  \neq 0, \ S_1  \neq 0.
\end{equation}
Note that here the solution for $k_4$, $k_3$ and $k_1$ depend on the choice of the root $k_2^{1,2}$ and that we changed the inequation $L_1^2 \, S_1^2 - 2 \, L_1 \, S_1 \, S_2 - 4 \, L_3 \, S_1 + S_2^2\neq0$ for the discriminant into $L_1^2 \, S_1^2 - 2 \, L_1 \, S_1 \, S_2 - 4 \, L_3 \, S_1 + S_2^2>0$ to guarantee that the roots are real. The remaining simple systems are presented in subsection \ref{appendixmodel8} of the appendix.

The generic simple system provides a finite set of solutions of the inverse problem on the open domain of dimension 5
defined by \eqref{model8conditions}. 
According to the Section~\ref{sec:solv} this means that
the model $M8$ is solvable.

\subsection{A Thomas decomposition for model 9}
We compute an algebraic Thomas decomposition of the algebraic system 
\begin{eqnarray}\label{problem:model9}
\algsystem &=& \big\{ -S_1 = k_5, \ -S_2 = k_5 \, (k_1 + k_2 + L_1), \ L_1 = -k_1 - k_2 - k_3 - k_4 - k_5 ,  \notag \\[0.5em] && \ L_3 = -k_1 k_2 k_5, \ L_2 = k_1 \, k_2 + k_1 \, k_4 + k_1 \, k_5 + k_2 \, k_3 + k_2 \, k_5 \big\}
\end{eqnarray}
for model 9 with respect to the ranking  
\[
k_3 > k_1 > k_4 > k_2 > k_5 > \bbv . 
\]
The \textsc{Maple} implementation returns $15$ simple systems $\algsystem_1,\dots,\algsystem_{15}$, where the most general simple system is 
\[
\begin{array}{rcl}
\algsystem_1 &=& 
\big\{ L_1 \, S_1 \, S_2 - L_2 \, S_1^2 + L_3 \, S_1 - S_2^2 + (2 \, k_2 \, S_1^2 + L_1 \, S_1^2 - S_1 \, S_2) \, k_3 + (-S_1^3 + S_1 \, S_2) \, k_2 = 0, \\[0.5em] &&  \ -L_1 \, S_1^2 + L_2 \, S_1 + S_1 \, S_2 + (2 \, k_2 \, S_1 + L_1 \, S_1 - S_2) \, k_4 + (-S_1^2 + S_2) \, k_2 - L_3 = 0, \\[0.5em] && \ k_1 \, S_1 + k_2 \, S_1 + L_1 \, S_1 - S_2 = 0, \ k_2^2 \, S_1 + (L_1 \, S_1 - S_2) \, k_2 + L_3 = 0, \\[0.5em]  &&  \ k_5 + S_1 = 0, \ L_1^2 \, S_1^2 - 2 \, L_1 \, S_1 \, S_2 - 4 \, L_3 \, S_1 + S_2^2  \neq 0, \ L_3  \neq 0, \ S_1  \neq 0 \big\}
.
\end{array}
\]
Similar as for model 4 and 8 one determines the solutions
\begin{eqnarray}
    k_5 &=&-S_1, \notag \\ 
    k_2^{1,2} &=& -\frac{L_1 \, S_1 - S_2 \pm \sqrt{L_1^2 \, S_1^2 - 2 \, L_1 \, S_1 \, S_2 - 4 \, L_3 \, S_1 + S_2^2}}{2 \, S_1} ,\notag \\
    k_4 &= &\frac{L_1 \, S_1^2 + S_1^2 \, k_2^{1,2} - L_2 \, S_1 - S_1 \, S_2 - S_2 \, k_2^{1,2} + L_3}{2 \, k_2^{1,2} \, S_1 + L_1 \, S_1 - S_2} ,\notag\\
    k_1 &= &- \frac{k_2^{1,2} \, S_1 + L_1 \, S_1 - S_2}{S_1}, \notag \\ 
    k_3 &=& -\frac{-S_1^3 \, k_2^{1,2} + L_1 \, S_1 \, S_2 - L_2 \, S_1^2 + S_1 \, S_2 \, k_2^{1,2} + L_3 \, S_1 - S_2^2}{S_1 \, (2 \, k_2^{1,2} \, S_1 + L_1 \, S_1 - S_2)} \label{solutions9simple}
\end{eqnarray}
subject to the conditions 
\begin{equation}\label{model9conditions}
L_1^2 \, S_1^2 - 2 \, L_1 \, S_1 \, S_2 - 4 \, L_3 \, S_1 + S_2^2 > 0, \ L_3  \neq 0, \ S_1  \neq 0 .
\end{equation}
Note that the solutions for $k_4$, $k_1$, $k_3$ depend on the choice of the root $ k_2^{1,2}$ and to guarantee real roots we changed the inequation representing the discriminant $L_1^2 \, S_1^2 - 2 \, L_1 \, S_1 \, S_2 - 4 \, L_3 \, S_1 + S_2^2 \neq 0$ into $L_1^2 \, S_1^2 - 2 \, L_1 \, S_1 \, S_2 - 4 \, L_3 \, S_1 + S_2^2 > 0$.
The remaining 14 simple algebraic systems can be found in subsection \ref{appendixmodel9} of the appendix.

The generic simple system provides a finite set of solutions of the inverse problem on the open domain of dimension 5
defined by \eqref{model9conditions}. 
According to the Section~\ref{sec:solv} this means that
the model $M9$ is solvable.

\section{Model degeneracy effects}\label{sec:rashomon}


Several examples were used elsewhere \cite{ocinneide_non-uniqueness_1989}
to illustrate the model degeneracy of the phase-type distribution.
Indeed, given a phase-type distribution, one can have two or more distinct representations in terms of the same chain but with different rate parameters, and a Markov chain may provide representations of all distributions represented by a different chain.
In this section, we explore 
the degeneracy of multi-exponential phase-type distributions and illustrate this phenomenon with examples significant in biological applications.


\subsection{Discrimination between variant models}
For a fixed number of states $N$,  {there are} several variant models that predict exactly the same phase-type distribution and fit  {the data equally} well. The variant models can be generated automatically. For instance
in Figure~\ref{figure2} we list, up to permutation symmetries of the transition graphs, all the models that produce three exponential ($N=3$) phase-type distributions. Among these models, 
$M2$, $M4$, $M8$, and $M9$ are solvable.
One can not discriminate between any two of the four solvable models by using only time-to-event
data.
The non-solvable model $M3$ does not generate
any multi-exponential phase-type distribution, but only those distributions whose parameters satisfy \eqref{eqn:model3_simple1_cond}. Additionally, the inverse problem has an infinite set of solutions for the model $M3$. 
Although in practice we favor solvable models because in this
case the inverse problem is well posed, 
ultimately only experimental results
should be used to exclude non-solvable variants. 

Mathematically, it is interesting to classify  solvable models. One classification can be performed, based on the number of solutions. For instance in Figure~\ref{figure2} the inverse problem  has  {a} unique solution for model $M2$, whereas it has two distinct solutions for models $M4$, $M8$, $M9$. Model $M9$ is obviously symmetric with respect to  permutation of the 
vertices 1 and 2, which explains the two-fold degeneracy of the solutions, but $M4$ and $M8$ have
no obvious permutation symmetries. It is therefore interesting to identify possible algebraic
relations relating these three models together. These relations will be discussed in the next subsection.

 {To identify model variants experimentally, we aim to evaluate the potential of additional data, beyond time-to-event measurements, to differentiate between models.} Two natural candidates for  {these} data are the steady-state occupation probabilities of the Markov chain states and the lifetime of each state, defined as the average time spent in a state before transitioning. Depending on the application, experimental methods are available to estimate these quantities.
 We will discuss some of these methods in the 
Section~\ref{sec7} dedicated to transcriptional bursting.



\subsection{Symmetries and classification of solvable models}
Solvable models 
that generate the same phase-type distribution
must satisfy certain common conditions, which we outline in this section. These conditions 
 {allow us to identify}
mappings that relate the parameters of one solvable model to those of another.

Let us consider the matrix $\vect{{\tilde Q}}^{\mathrm{red}}$ that is obtained from $\vect{\tilde Q}(\vect{k})$ by 
setting $k_{2N+1}$ to zero. This matrix
is the dual generator of a
reduced Markov chain
describing the transitions between the 
states $1,\ldots,N$.
Then the steady state probabilities of the  states $1,\ldots,N$
can be computed as solutions of the system:
\begin{eqnarray}
\vect{{\tilde Q}}^{\mathrm{red}} \begin{pmatrix} p_1(\vect{k}) \\  p_2(\vect{k}) \\ \vdots \\ p_N(\vect{k}) \end{pmatrix} &=& 0, \notag \\
p_1(\vect{k}) + p_2(\vect{k}) + \ldots + p_N(\vect{k}) &=& 1. \label{eq:occupancies}
\end{eqnarray}
Let $T_i$ be the lifetime of the state $i$, where $1\le i \le N$. 
This can be calculated  from the diagonal elements of the matrix $\vect{{\tilde Q}}^{\mathrm{red}}$:
\begin{equation}\label{eq:lifetimes}
    T_i = -1/{\tilde Q}^{\mathrm{red}}_{ii} = 1 / \sum_{j\neq i}{\tilde Q^{\mathrm{red}}_{ij}}.
\end{equation}
The result of these calculations for $N=3$ is given in the Table~\ref{table:times}.

\begin{table}[h!]
  \begin{tabularx}{\linewidth}{l*{4}{>{\centering\arraybackslash}Y}}
    \toprule
Model & $T_1^{-1}$  & $T_2^{-1}$  & $T_3^{-1}$  & $P_{3}$ \\
    \midrule
$M2$  &  $k_1+k_2$ & $k_3$ &  $k_4$  &   $\frac{k_2k_3}{k_1k_4 + k_2 k_3 +  k_3k_4}$ 
\\  
$M4$  &  $k_1+k_2$ & $k_3$ &  $k_4$  &   $\frac{k_1k_3+ k_2k_3}{k_1k_3 + k_1k_4 + k_2 k_3 +  k_3k_4}$    \\  
$M8$  &  $k_1$   & $k_2$  & $(k_3+k_4)$  & $\frac{k_1k_2}{k_1k_2 + k_1k_3 + k_1 k_4 +  k_2k_3}$  \\
$M9$  &  $k_1$    & $k_2$  &  $(k_3+k_4)$  &  $\frac{k_1k_2}{k_1k_2 + k_1k_4 + k_2k_3}$ \\
  \bottomrule
  \end{tabularx}
\caption{Reciprocal lifetimes $T_i^{-1}$ and steady state occupation probability $P_3$ for the solvable models $M2$, $M4$, $M8$, $M9$. \label{table:times} }
\end{table}


The steady state probabilities and lifetimes allow us to formulate some general constraints on solvable models
as follows
\begin{proposition}\label{lem:constraints}
All solvable models that 
generate the same phase-type distribution have
\begin{enumerate}[label=(\roman*)]
\item \label{lem15i}
the same value of $k_{2N-1}$,
\item  \label{lem15ii}
the same value of the lifetime $T_N$ of the state $N$,
\item  \label{lem15iii}
the same value of the steady state probability $p_N$ of the state $N$. 
\end{enumerate}
\end{proposition}
\begin{proof}
\ref{lem15i} is a direct consequence of \eqref{eq:sym}.
From \eqref{eq:sym} and \eqref{vieta} it  follows:
$$
S_1 - \frac{S_2}{S_1} = - k_{2N-1} 
+ a_{N-1}(\vect{k}) - c_{N-2}(\vect{k}).
$$
From the definitions of $a_{N-1}(\vect{k})$ and $c_{N-2}(\vect{k})$ and
elementary properties of the determinant we find
$$
-a_{N-1}(\vect{k}) = -c_{N-2}(\vect{k}) +
{\tilde Q}^{\mathrm{red}}_{NN} - k_{2N-1}.
$$
It follows 
$$
S_1 - \frac{S_2}{S_1} =-{\tilde Q}^{\mathrm{red}}_{NN}= T_N^{-1},
$$
which implies \ref{lem15ii}. 

Let $\bar w$ be the mean value of the phase-type distribution. 
The mean number of  events on an interval
$[0,T]$ is $T/\bar w$. The same number is equal to 
$T p_{N} k_{2N-1}$ because the model goes to $N+1$ only from the 
state $N$ with an intensity $k_{2N-1}$. It follows that
$$
\bar w = \frac{1}{p_{N} k_{2N-1}},
$$
for all models.
We already know that $k_{2N-1}$ is the same for all models, 
therefore \ref{lem15iii} follows. 
\end{proof}
As can be seen in Figure~\ref{figure2}, $M9$ is a symmetrical model. 
The transformation $\sigma_{M9} : \RR^5 \to \RR^5$ defined
by $\sigma_{M9}(k_1,k_2,k_3,k_4,k_5) = (k_2,k_1,k_4,k_2,k_5)$,
that is equivalent to a permutation of the vertex labels $1$ and $2$, leaves the inverse problem
\eqref{problem:model9} invariant and therefore transforms any solution of the problem into
another solution.
As $\sigma_{M9}^2=e$, where $e$ is the identity, the symmetry group is isomorphic to $\ZZ_2$.
Solutions that are not symmetric
are then two-fold degenerated. 

In order to see the symmetry of the solutions we rewrite \eqref{solutions9simple} as 
\begin{eqnarray}
    k_5 &=-S_1, \notag \\ 
    k_1 &= -\frac{L_1 \, S_1 - S_2 \pm \sqrt{ (L_1 \, S_1 - S_2)^2 - 4 \, L_3 \, S_1 }}{2 \, S_1} ,\notag \\
    k_2 &= -\frac{L_1 \, S_1 - S_2 \mp \sqrt{(L_1 \, S_1 - S_2)^2 - 4 \, L_3 \, S_1 }}{2 \, S_1} ,\notag \\
    k_3 &=  \frac{1}{2S_1} \left[
     S_1^2 - S_2 \pm 
    \frac{L_1S_1^3 - S_1^2S_2 - 2L_2S_1^2 + L_1S_1S_2 + 2L_3S_1 - S_2^2}{
    \sqrt{L_1 S_1 - S_2)^2 - 4 L_3 S_1}
    }
    \right]
    ,\notag \\
    k_4 &=  \frac{1}{2S_1} \left[
     S_1^2 - S_2 \mp
    \frac{L_1S_1^3 - S_1^2S_2 - 2L_2S_1^2 + L_1S_1S_2 + 2L_3S_1 - S_2^2}{
    \sqrt{L_1 S_1 - S_2)^2 - 4 L_3 S_1}
    } 
     \right].
    \label{solutions9sym}
\end{eqnarray}
\eqref{solutions9sym} shows that each of the two solutions of the inverse problem is not symmetric,
whereas the set of two solutions is symmetric with respect to  $\sigma_{M9}$ that
transforms one solution into the other.

The models $M4$ and $M8$ have no symmetry of the transition graphs and no obvious algebraic
symmetry of the inverse problem. 
However, the following property shows that these models are tightly related to $M9$. 
\begin{proposition}\label{prop:mappings}
There are bijective rational maps $f_{M,M'}: D \subset \RR^5 \to D'\subset \RR^5$,
relating parameters $\vect{k}$ of model $M$ to parameters $\vect{k'}$ of the model $M'$
(i.e. $\vect{k'} = f_{M,M'}(\vect{k})$)
where $M,M'$ are any pair in the list $\{M4, M8, M9\}$, and $D,D'$ are open domains
containing all the real positive parameters resulting from the inverse problem of $M,M'$. 
$f_{M,M'}$ satisfy the following constraints
\begin{enumerate}[label=(\roman*)]
\item \label{prop16i}
The parameter $k_5$  is the same for all the models. 
\item \label{prop16ii}
The lifetimes of the states $1,2,3$
is the same for all the models. 
\item \label{prop16iii}
 {Under steady state conditions,} the probability to be in the state $3$ 
is the same for all the models.  
\end{enumerate}
Model $M$ with parameters  $\vect{k}$ has exactly the same phase-type distribution 
as model $M'$ with phase-type $\vect{k'}$. The property does not hold if 
the model $M2$ is included in the list. 
\end{proposition}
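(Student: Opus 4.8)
The plan is to construct $f_{M,M'}$ by composing the polynomial ``forward'' map of $M$ with the explicit inverse map of $M'$ obtained in Section~5, after noting that for $M4$, $M8$ and $M9$ these inverse maps have the same algebraic shape. For a model $M$, write $\Phi_M\colon\vect{k}\mapsto(L_1,L_2,L_3,S_1,S_2)$ for the forward map; it is polynomial in $\vect{k}$ by Vieta's formulas \eqref{vieta} (for the $L_i$) and by the symmetrized relations \eqref{eq:sym} (for $S_1,S_2$). In the non-degenerate case the phase-type distribution is determined by the multiset $\{(\lambda_i,A_i)\}$, and since the $\lambda_i$ are the roots of $\lambda^3-L_1\lambda^2+L_2\lambda-L_3$ while $(A_1,A_2,A_3)$ is recovered from $(1,S_1,S_2)$ by inverting the Vandermonde matrix of the (distinct) $\lambda_i$, the phase-type distribution is a bijective function of $(L_1,L_2,L_3,S_1,S_2)$. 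Hence it suffices to build $f_{M,M'}$ so that $\Phi_{M'}\circ f_{M,M'}=\Phi_M$.

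Concretely, I would let $E\subset\RR^5$ be the nonempty Zariski-open set defined by $L_1^2S_1^2-2L_1S_1S_2-4L_3S_1+S_2^2>0$, $L_3\neq0$, $S_1^3-S_1S_2\neq0$, $S_2\neq0$, i.e.\ the intersection of the genericity conditions \eqref{model4conditions}, \eqref{model8conditions}, \eqref{model9conditions}. Over $E$ each of $M4$, $M8$, $M9$ has exactly two solutions, those coming from the first (most generic) simple system in its Thomas decomposition --- the remaining simple systems force extra equalities on $\bbv$ and contribute nothing over $E$ --- and in all three cases the sign ambiguity is carried by the two roots $r_+,r_-$ of \emph{one and the same} quadratic $S_1x^2+(L_1S_1-S_2)x+L_3$, which is exactly the quadratic equation occurring in each of those generic simple systems. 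Thus, setting $D_M:=\Phi_M^{-1}(E)$, every fibre of $\Phi_M\colon D_M\to E$ has exactly two points, which I would label by the root $\epsilon\in\{+,-\}$ appearing in the formula for the distinguished transition rate ($k_3$ for $M4$, $k_2$ for $M8$ and $M9$). Define $f_{M,M'}\colon D_M\to D_{M'}$ to carry the point over $(L_1,\dots,S_2)$ on sheet $\epsilon$ to the point over the same $(L_1,\dots,S_2)$ on sheet $\epsilon$. This is a bijection with inverse $f_{M',M}$, it satisfies $\Phi_{M'}\circ f_{M,M'}=\Phi_M$ by construction, and so $M$ with $\vect{k}$ and $M'$ with $f_{M,M'}(\vect{k})$ yield the same phase-type distribution.

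Next I would check (i)--(iii); the point is that each quantity involved is a function of $(L_1,\dots,S_2)$ alone and hence is preserved by $f_{M,M'}$. For (i), $k_5=-S_1$ in all three models by the Section~5 formulas (this is also part \ref{lem15i} of Lemma~\ref{lem:constraints}). For (iii) and the ``state $3$'' part of (ii), parts \ref{lem15ii} and \ref{lem15iii} of Lemma~\ref{lem:constraints} already exhibit $T_3=S_1/(S_1^2-S_2)$ and $p_3$ as functions of the phase-type distribution. For the lifetimes $T_1,T_2$ of the remaining two states, a short computation with the Section~5 solutions shows that the two quantities whose reciprocals are $T_1$ and $T_2$ are exactly $r_+$ and $r_-$: for $M4$ these are $k_1+k_2$ and $k_3$, for $M8$ and for $M9$ they are $k_1$ and $k_2$, and in each case one checks that this pair has sum $-(L_1S_1-S_2)/S_1$ and product $L_3/S_1$, i.e.\ is the pair of roots of $S_1x^2+(L_1S_1-S_2)x+L_3$. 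Hence $\{T_1,T_2\}$ depends only on $(L_1,\dots,S_2)$, which gives (ii).

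Finally, $M2$ cannot be added to the list. Its inverse problem has a \emph{unique} solution (Theorem~\ref{interval}), so $\Phi_{M2}$ is generically one-to-one, whereas $\Phi_{M9}$, say, is generically two-to-one; over the nonempty open set where both statements hold, a phase-type-preserving bijection $D\to D'$ would have to intertwine $\Phi_{M2}$ with $\Phi_{M9}$ and therefore equate fibres of different cardinalities, which is impossible. (Equivalently, the same computation shows that for $M2$ the pair $\{T_1,T_2\}=\{1/(k_1+k_2),1/k_3\}$, with the $M2$ solution substituted, is generically not the reciprocal pair of roots of $S_1x^2+(L_1S_1-S_2)x+L_3$, so already (ii) fails for $M2$.) The step I expect to be the main obstacle is the bookkeeping: proving that over $E$ there really are exactly two solutions for each of $M4$, $M8$, $M9$, so the non-generic simple systems are irrelevant there, and labelling the two sheets of each cover coherently across the three models so that $f_{M,M'}$ is well defined and, if one wants the \emph{ordered} lifetimes $(T_1,T_2)$ to match, the roles of states $1$ and $2$ are consistently identified.
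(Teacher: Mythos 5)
Your proof is correct, but it reaches the conclusion by a genuinely different route than the paper. The paper constructs $f_{M,M'}$ in the opposite direction: it takes (i)--(iii) as the \emph{defining} equations of the map (four equations in four unknowns once $k_5$ is fixed, written out via Table~\ref{table:times} as in \eqref{map1relations} and \eqref{map2relations}), solves them to obtain the closed-form rational maps \eqref{map1} and \eqref{map2}, and handles the direction ``same phase-type distribution $\Rightarrow$ (i)--(iii)'' by Lemma~\ref{lem:constraints} for (i), (iii) and $T_3$, plus unspecified ``longer, but elementary calculations'' for $T_1,T_2$. You instead define $f_{M,M'}$ as the sheet-preserving lift $\Phi_{M'}^{-1}\circ\Phi_M$ over the common base $E$ of symmetric-function data, which makes phase-type preservation tautological, and then derive (i)--(iii) as consequences. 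Your key observation --- that in all three generic simple systems the two-fold ambiguity is carried by one and the same quadratic $S_1x^2+(L_1S_1-S_2)x+L_3$, and that $\{T_1^{-1},T_2^{-1}\}$ is precisely its root pair ($k_3$ and $k_1+k_2$ for $M4$; $k_2$ and $k_1$ for $M8$ and $M9$; one checks e.g.\ for $M4$ that $k_1+k_2+k_3=-(L_1S_1-S_2)/S_1$) --- is a clean conceptual substitute for the paper's unstated computation, and it simultaneously explains the two-fold degeneracy of $M4$, $M8$, $M9$ and why $M2$ fails (ii). What your version does not deliver, and the paper's does, are the explicit formulas \eqref{map1}, \eqref{map2} for $f_{M9,M8}$ and $f_{M9,M4}$ in terms of $\vect{k}$, which the paper exploits afterwards to compute the hidden symmetries of $M4$ and $M8$. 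One caveat worth making explicit: as the paper's domain $\{k'_2>k'_1>0,\,k'_3>0,\,k'_4>k'_1k'_3/(k'_2-k'_1)\}$ for the image of $f_{M9,M8}$ shows, sheet-matching does not preserve positivity of the rates, so your $D,D'$ must indeed be taken as $\Phi_M^{-1}(E)$ (or suitably restricted cones) rather than the full positive orthant; this is consistent with the proposition as formulated but should be stated.
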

\begin{proof}
Consider that there are mappings relating parameters of  
solvable models that have the same phase-type distribution, in 
particular the same parameters of the phase type distributions. 
Then, \ref{prop16i} and \ref{prop16iii} follow from the Proposition~\ref{lem:constraints}. The same Proposition implies that $T_3$ is the same for all models.
Longer, but elementary calculations using the solutions of the
inverse problem obtained in the section 
allow us to check that $T_1$ and $T_2$ are the same for all the models 
in the list $M4$,$M8$,$M9$. However, this is no true 
if we include $M2$ in the list (this model has different values of $T_1$ and $T_2$).
Reciprocally, if we impose \ref{prop16i}, \ref{prop16ii}, \ref{prop16iii} one finds four equations relating
$\vect{k}$ and $\vect{k'}$. These equations, if they have solutions, define
the map $f_{M,M'}$. As a matter of fact, solutions exist and are easy to 
compute. 
For instance if $k_i$ are  the parameters of $M9$
corresponding to the parameters $k'_i$ of $M8$, then using the Table~\ref{table:times} the conditions \ref{prop16ii}, \ref{prop16iii} read
\begin{eqnarray}
k_1 &=& k'_1, \notag \\
k_2 &=& k'_2, \notag \\
k_3+k_4 &=& k'_3 + k'_4, \notag \\
\frac{k_1k_2}{k_1k_2 + k_1k_4 + k_2k_3} &=& \frac{k'_1k'_2}{k'_1k'_2 + k'_1k'_3 + k'_1k'_4 + k'_2k'_3}. \label{map1relations}
\end{eqnarray}

The solutions of system \eqref{map1relations} read
\begin{eqnarray}
k'_1 &=& k_1, \notag \\
k'_2 &=& k_2, \notag \\
k'_3 &=&  \frac{ k_3(k_2 -k_1)}{k_2}, \notag \\
k'_4 &=& \frac{k_1 k_3 + k_2 k_4}{k_2},  \label{map1}
\end{eqnarray}
and define the bijective rational map
$f_{M9,M8}: \{k_1> k_2>0,k_3>0,k_4>0 \} \to 
\{  k'_2> k'_1>0,k'_3>0,k'_4>\frac{k'_1 k'_3}{k'_2-k'_1} \}$.

Similarly, solving
\begin{eqnarray}
k_1 &=& k'_1 +k'_2,\notag \\
k_2 &=& k'_3, \notag \\
k_3+k_4 &=& k'_4, \notag \\
\frac{k_1k_2}{k_1k_2 + k_1k_4 + k_2k_3} &=& \frac{k'_1k'_3+k'_2k'_3}{k'_1k'_3 + k'_1k'_4 + k'_2k'_3 + k'_3k'_4}, \label{map2relations}
\end{eqnarray}
with respect to $\vect{k}'$ leads to the equations
%
\begin{eqnarray}
k'_1=(k_1k_4 - k_2k_4)/(k_3 + k_4),\notag \\
k'_2=(k_1k_3 + k_2k_4)/(k_3 + k_4),\notag \\
k'_3= k_2,\notag \\
k'_4 = k_3 + k_4,
\label{map2}
\end{eqnarray}
that define the map $f_{M9,M4}:
\{ k_1 > k_2 > 0, k_3 > 0, k_4>0    \}
\to
\{ k'_1 > k'_2 > 0,  k'_2 > k'_3 > 0, k'_4>0    \}$.
\end{proof}
\begin{remark}
The Proposition~\ref{prop:mappings} explains the two-fold  degeneracy of the 
solutions of the inverse problem for models $M4$,$M8$.
Indeed the solutions of $M4$ read as $f_{M9,M8}(\vect{k})$
and $f_{M9,M8}\circ \sigma_{M9}(\vect{k})$ where
$\vect{k}$ is a solution of $M9$. 
\end{remark}
\begin{remark}
The Proposition~\ref{prop:mappings} allows to compute the symmetries of the models $M8$ and $M4$.  The symmetry groups are isomorphic 
 to $\ZZ_2$ and generated by
$f_{M9,M8} \circ \sigma_{M9} \circ f^{-1}_{M9,M8}$, and
$f_{M9,M4} \circ \sigma_{M9} \circ f^{-1}_{M9,M4}$, respectively. 
\end{remark}
 {\begin{remark}
Of course, there are also mappings that relate the parameters of models M4, M8, or M9 to those of model M2, resulting in the same phase-type distribution; however, these mappings are many-to-one.
This is consistent with the
remark in Section~\ref{vier} that
the degree of the field
extension from the field generated by survival function parameters to the field generated by kinetic parameters 
is one for $M2$ and
larger than one for the other models. 
\end{remark}}





\begin{figure}[H]
\begin{center}
\includegraphics[scale=0.4]{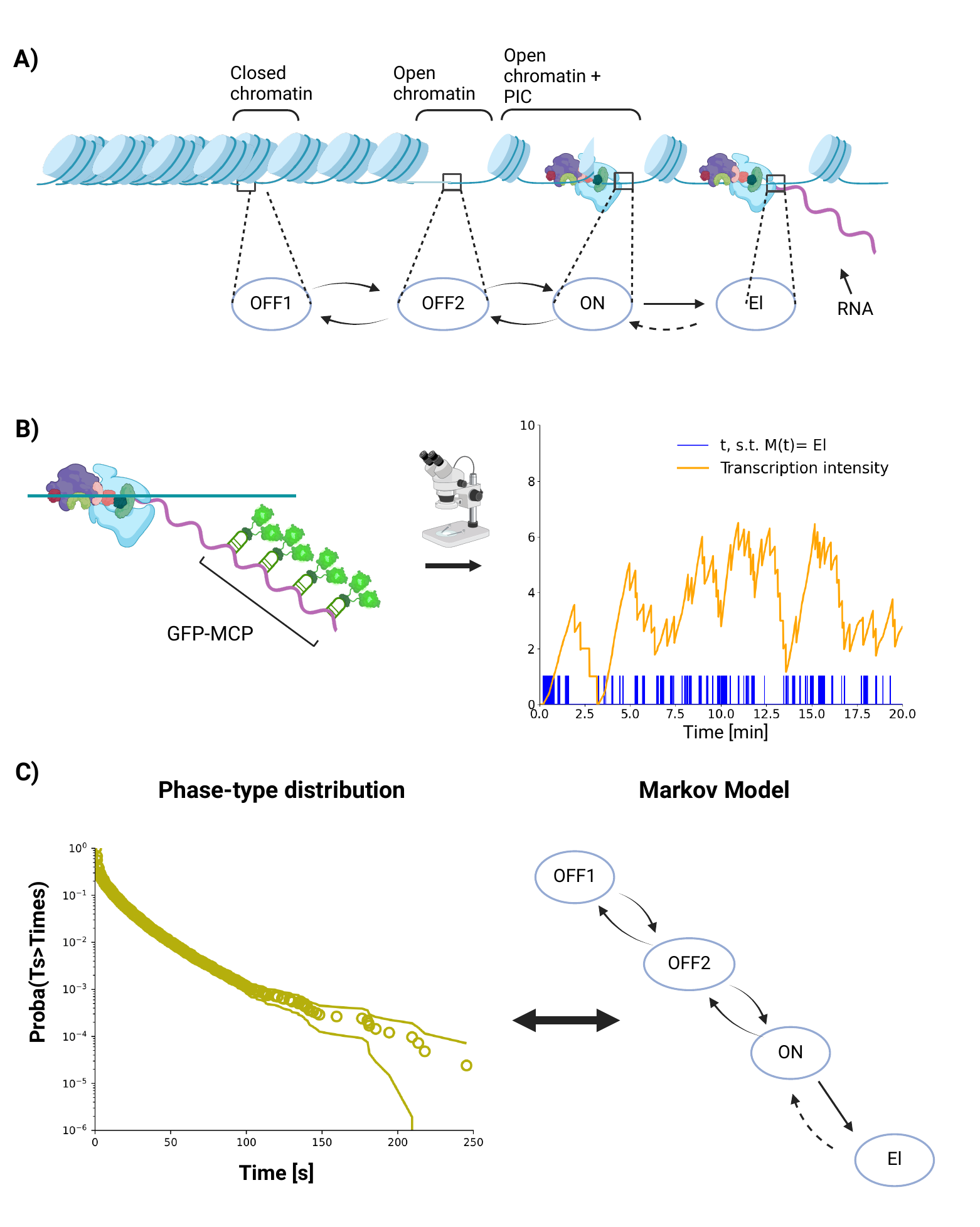}
\end{center}
\caption{Experimental setting for identifying 
phase-type distributions and Markov chain models of transcriptional bursting. A) Transcription requires multiple preceding steps
such as  chromatin opening and formation of transcription complexes needed for activation of the polymerase. These steps can be modeled
as transitions between discrete steps. 
B) Fluorescent tagging of mRNA structures allow
observation of mRNA synthesis. The observed signal is deconvolved to obtain the transcription initiation events. C) The phase-type distribution is used to infer the Markov chain model.  
\label{fig:experiment}
}
\end{figure}
\section{Application to transcriptional bursting models}
\label{sec7}
\subsection{Biological considerations and assumptions}
Transcription is the fundamental molecular 
process that copies the DNA into RNA. The synthesis of RNA  requires a dedicated molecular 
machinery, including an RNA polymerase (RNAP) enzyme 
that binds the DNA in specific regulatory regions able
to activate this process, called promoter regions. Transcription requires multiple preceding steps
such as  
chromatin opening and formation of transcription complexes needed for activation of the polymerase. 
Here we consider that transcription has a small number of limiting steps that can be modeled as transition between discrete states (Figure~
\ref{fig:experiment}~A and Table~\ref{table:models}).  We classify these states in three categories: productive ON states 
that can initiate transcription, non-productive OFF states 
that are unable to initiate or resume transcription, and
paused states PAUSE in which initiated transcription halts and can resume later or abort. 
\begin{table}[h!]
  \begin{tabularx}{\linewidth}{l*{4}{Y}}
    \toprule
    \multicolumn{5}{l}{\textbf{Possible state types  }} \\
    \midrule
Model & state $1$  & state $2$  & state $3$  & state $4$ \\

$M2$  &  OFF & OFF &  ON  &   EL 
\\  
$M3$  &  OFF & OFF &  ON  &  EL    \\ 
$M4$  &  PAUSE /OFF & OFF &  ON  &  EL    \\  
$M8$  &  PAUSE /OFF   & OFF  &  ON  & EL  \\
$M9$  &  PAUSE /OFF    & OFF/PAUSE  &  ON  &  EL \\
  \end{tabularx}
  \begin{tabularx}{\linewidth}{l*{2}{Y}}
    \toprule
    \multicolumn{2}{l}{\textbf{Possible biological interpretation of  state types}} \\
    \midrule
long OFF  &  Enhancer-Promoter disconnection  / Chromatin closing  \\

short OFF  & Transcription Factor unbinding  / Pre-initiation Complex  disassembly  \\

PAUSE & Promoter proximal pausing / Intrinsic pausing / Transcriptional roadblock \\

    \bottomrule
  \end{tabularx}

    \caption{Possible biological interpretation of the states in the models $M2$, $M3$, $M4$, $M8$, $M9$. For $M4,M8,M9$ pausing is always abortive. 
    \label{table:models} }
\end{table}
The promoter starts transcription in the productive ON state, when 
it can trigger several departures of RNAP molecules. After ON, the RNAP can eventually stop in PAUSE or commit to irreversible productive 
elongation modeled by the state EL.
Elongating polymerases move away from the promoter (to proceed into the gene body), thereby
freeing the promoter that instantly enters  
either a new 
productive ON state or a non-productive state OFF.
 
Live imaging of transcription  allows the  detection of 
the synthesis of new mRNA molecules  
 in real time and for each transcription site \cite{pichon2018growing}. Using the tool
BurstDeconv \cite{douaihy2023burstdeconv} we can deconvolve the live imaging signal and identify
the time of each elongation  event. This means that for each transcription site we observe the sequence of EL states (Figure~
\ref{fig:experiment}B). 
The other states of the promoter are  unobservable.

Strictly speaking, transcription initiation 
is not synonymous with the start of processive elongation, as transcription intiation can be abortive. Abortive transcription initiation events cannot be detected in our experimental settings.
Therefore, in this paper, whenever we refer to transcription initiation, we consider the processive case. 

We also consider that the state EL (denoted $N+1$ in our abstract Markov chain model) can be reached only from a state that can be either ON or PAUSE, and after reaching EL the promoter instantly switches to a unique return state $s$, that can be either 
productive or non-productive.
 {The second assumption is needed
for applying the phase-type distribution formalism.
The first assumption, not applying to the case of multiple ON states, was lifted in Subsection~\ref{sec:noC1}. 
Using this extension of our approach we can also address 
scenarios in which transcription occurs simultaneously on duplicate DNA templates, as for example newly replicated sister chromatids. 
}





\subsection{Discriminating between variant models using state probabilities and lifetimes.
}
As discussed in Section~\ref{sec:rashomon}, several different models can lead to the same phase-type distribution. For instance, three exponential 
phase-type distribution data can have the 
same likelihood for models $M2$, $M4$, $M8$, $M9$.
Furthermore, the states of these models 
can be interpreted in various ways. A possible
interpretation is provided in the Table~\ref{table:models}.






In this subsection we test the possibility to discriminate 
between solvable variant models using measurements of state lifetimes or probabilities. The probability that a
given state is occupied can be estimated from experimantal
data such as single molecule DNA footprinting assays
\cite{krebs2021studying,kleinendorst2021genome}.


Proposition~\ref{prop:mappings}
shows that state lifetimes can not discriminate between 
models $M4$,$M8$,$M9$. 

Additionally, Proposition~\ref{lem:constraints}
shows that for any $N$,  the occupancy
probability $p_N$ and the lifetime $T_N$ are 
the same for all models with $N$ states and the 
same phase-type
distribution. 

After these considerations, for $N=3$ 
we remain with three independent 
parameters $p_1,T_1,T_2$ as potential markers, discriminating between
variant solvable
models, except for models $M4$,$M8$,$M9$ where the only discriminating
parameter could be $p_1$. 

To evaluate the capacity of these markers to distinguish between variant models
we generate a dataset consisting of 
random parameters of the phase-type distribution 
$S_1,S_2,L_1,L_2,L_3$. For each set of phase-type distribution parameters we use the solutions of the inverse problem to compute the transition rate parameters
of the models $M2$,$M4$,$M8$,$M9$, excluding cases leading to
negative rate parameters.
From the rate parameters 
we compute the values of $p_1$, $T_1$, $T_2$
and the corresponding differences
$\Delta p_1$, $\Delta \log(T_1)$, $\Delta \log(T_2)$
between the maximum and minimum
values among the four variant models. Finally, we use these 
differences to estimate the
capacity of such marker values to discriminate between variants. 

We found that $p_1$ can discriminate between variants in $89\%$
of the cases, whereas $T_1,T_2$ can discriminate between variants
in $84\%$ of the cases, see Figure~\ref{figure5}. We have used random 
phase-type distribution parameters spanning domain values 
observed in studies of transcriptional bursting 
 of mammalian and fruit-fly genes \cite{Tantale2021,Pimmett2021}. 
This suggests that $p_1$ is better for discriminating among 
variant models than $T_1$ and $T_2$. This is confirmed
also for the particular case of a HIV gene promoter in mammalian
cells studied in \cite{Tantale2021}, where $T_1$ and $T_2$ are practically the same for all 
experimental conditions for the 3 variants, see Figure~\ref{figure7} below.
\begin{figure}
\begin{center}
\includegraphics[scale=0.8]{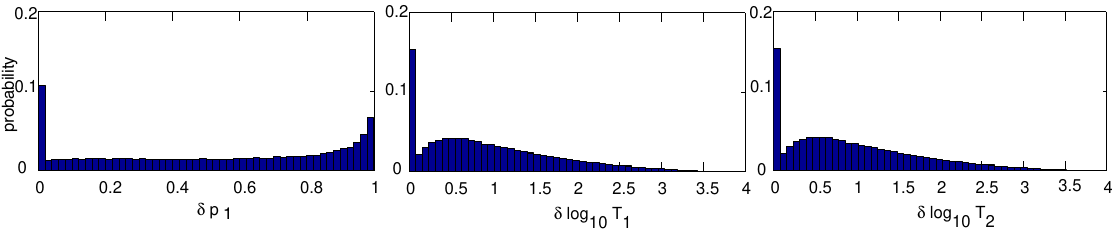}
\end{center}
\caption{
 {Testing the discrimination capacity of $p_1$, $T_1$, and $T_2$: We generated $100,000$ random parameter sets $(\lambda_1, \lambda_2, \lambda_3, A_1, A_2, A_3)$, where $A_1$ and $A_2$ are uniformly distributed in $[0,1]$, $A_3 = 1 - A_1 - A_2$, $\lambda_i < 0$, and $|\lambda_i|$ are log-uniformly distributed in $[10^{-4}, 1]$. From these, 61,411 sets were retained for which the inverse problem yielded positive rate parameters $k_1, k_2, k_3, k_4, k_5$ in at least one model (M2, M4, M8, or M9). Discrimination capacity for $p_1$, $T_1$, and $T_2$ is measured by the variations $\Delta p$, $\Delta \log_{10}(T_1)$, and $\Delta \log_{10}(T_2)$ across models with the same phase-type distribution parameters, where larger variations indicate better discrimination. The figures show frequency histograms of these variations, with no discrimination if the variation is zero. Results show that $p_1$, $T_1$, and $T_2$ fail to discriminate in $11\%$, $16\%$, and $16\%$ of the cases, respectively.}
\label{figure5}
}
\end{figure}

\begin{figure}[H]
\begin{center}
\includegraphics[scale=0.45]{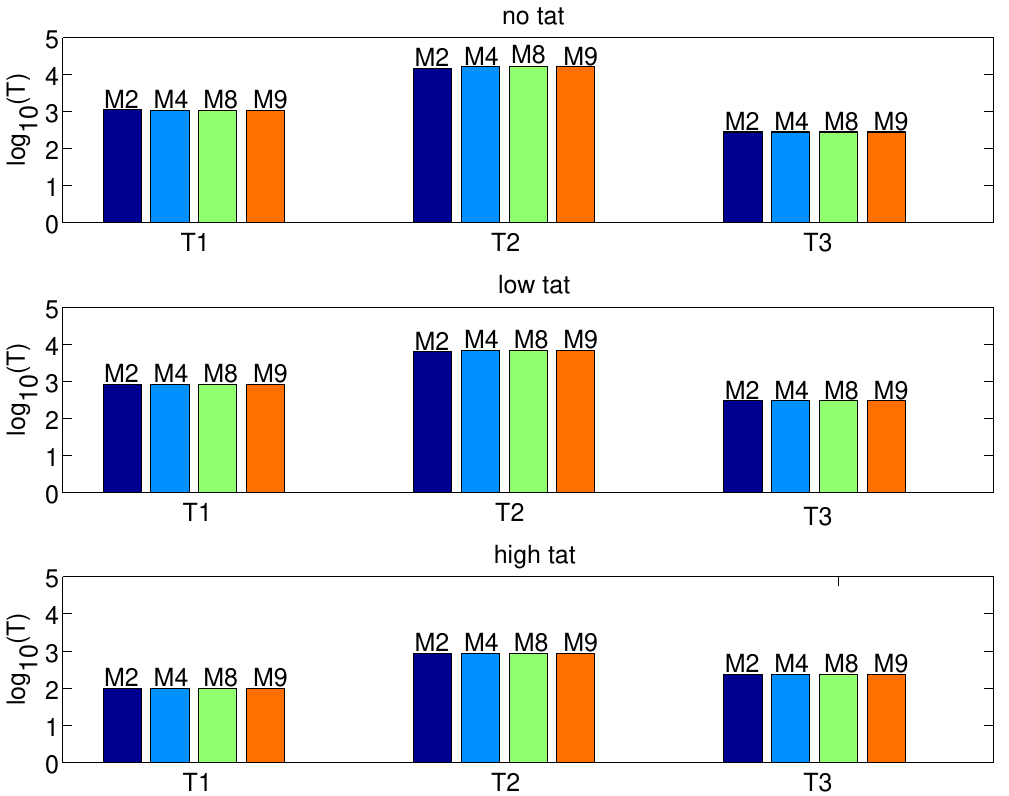}

\includegraphics[scale=0.45]{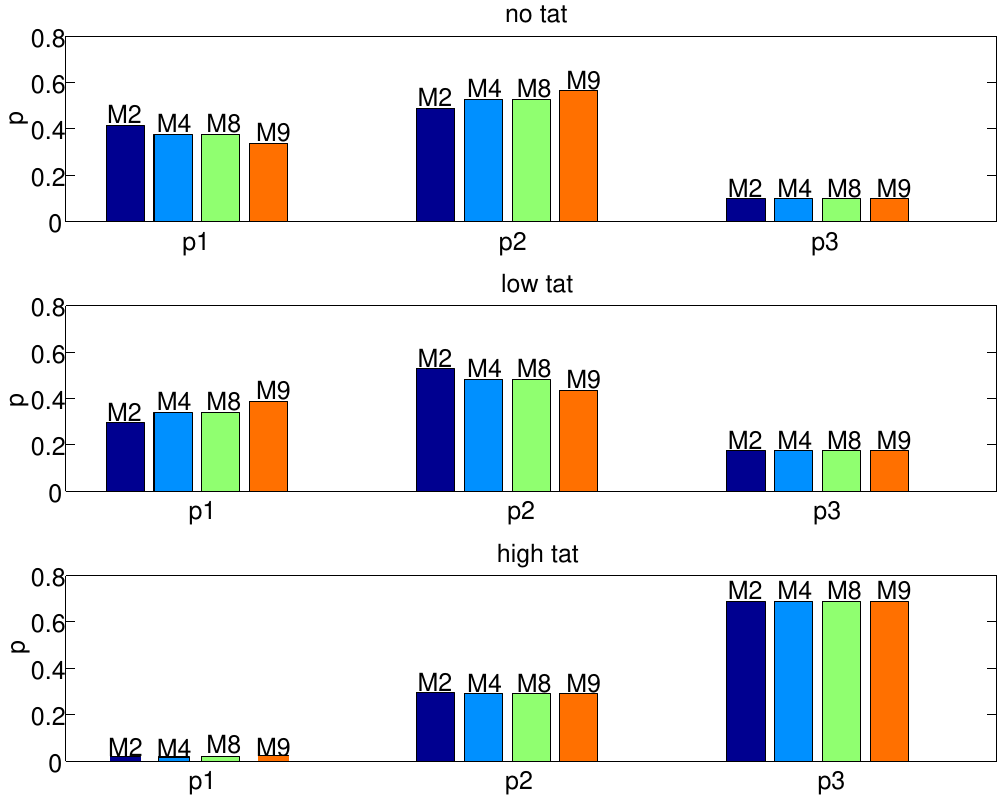}
\end{center}
\caption{Values of $p_i$ and $T_i$ for three  phase-type distributions corresponding to three experiments on cell lines infected with HIV. Increasing the level of the protein tat corresponds to decreasing the durations $T_1$, $T_2$
of the OFF states (see \cite{Tantale2021}). 
For these cells, under no-tat, low-tat, and high-tat conditions, the probabilities $p_1$, $p_2$ can discriminate between variant models, but the lifetimes $T_1$,$T_2$ cannot. \label{figure7}
}
\end{figure}
\section{Conclusion and perspectives.}
In contrast to statistical inference, which treats one model at a time, using symbolic solutions for the inverse phase-type distribution problem allows us to simultaneously infer all solvable models. Once the parameters of the phase-type distributions are estimated, our formulas provide the transition rate parameters for all solvable models.
 Thomas decomposition can be used to 
solve the inverse problem symbolically for models with a small number of states (we provide results for
$N=3$ but we tested the method also for $N=4$). 

The algebraic structure of the inverse problem for phase-type distributions leads us to a classification of Markov chain models compatible with the data. Some models are solvable, while others are not. Among the solvable three-state models, we discovered relationships stemming from the discrete  {symmetries} of the inverse problem. Extending these results to an arbitrary number of states requires different mathematical tools, which we will address in future work.

We identified examples of solvable models with an arbitrary number of states and established necessary conditions for their solvability. While sufficient conditions for solvability remain to be found, this task presents an exciting challenge for future research.


 {
In practical applications, Thomas decomposition provides formal solutions of the inverse problem. However, if a model has a solution set with a dimension greater than zero, or if multiple models have solutions, degeneracy arises, preventing unique identification from the experimental phase-type distribution. In such cases, additional data, such as state occupancy probabilities or state lifetimes, is needed. To achieve unique solutions, equations \eqref{vieta} and \eqref{eq:sym} (for $s=N$ or \eqref{vieta} and \eqref{eq:sym2} (for $s \neq N$) must be combined with equations \eqref{eq:occupancies} and \eqref{eq:lifetimes} that relate state occupancies and lifetimes to kinetic parameters.

However, limitations may arise from the complexity of the problem. Although models and parameters can theoretically be distinguished, many combinations may produce similar results, leading to practical degeneracy
and overfitting. Some practical
recipes to penalize complexity and avoid overfitting have been proposed in 
\cite{douaihy2023burstdeconv}.
}

We applied our approach to the study of transcriptional bursting in two different biological context, human cultured cells \cite{Tantale2021} and {\em Drosophila} developing embryos \cite{Pimmett2021}.

By mapping the parameters of the phase-type distribution to kinetic model parameters, we can identify mechanisms impacting transcriptional bursting from imaging datasets. This concept led to the development of BurstDeconv, a tool for analyzing transcriptional bursting data \cite{douaihy2023burstdeconv}. Utilizing Thomas decomposition, the computations presented here have enabled us to extend the range of models that BurstDeconv can analyze. Currently, BurstDeconv supports only the two-state random telegraph and the three-state $M2$ and 
$M9$ models. Additionally, we demonstrated that variant models indistinguishable by time-to-event data can be discriminated using state occupancy probabilities. 


 {In contrast to BurstDeconv, which only addresses transcription models with a single ON state, the results in this paper also apply  to models with multiple ON states. 
This broader approach enables the analysis of phenomena such as sister chromatids
(homologous DNA sequences from replication that can undergo transcription) where
transcription sites are considered to have multiple active ON states. 
In this case the symmetrized equations to solve are
\eqref{eq:symnoC1} (when the return
state is an ON state) and
\eqref{eq:sym2noC1} (when the return state is not an ON state).
 }

While our paper primarily centers on gene transcription applications, it's important to note that the phase-type distribution models discussed herein possess a broader range of potential uses. The most immediate of them concern the next steps of gene expression. Monitoring RNA translation initiation events as time-to-event data is feasible by using techniques such as SunTag 
\cite{tanenbaum2014protein,basyuk2021rna,blake2024imaging}, and the method presented here can also be applied to reverse engineer translation regulation mechanisms. Applying our multi-exponential models in epidemiology and medical research could also prove intriguing, as it may help discern disease stages and the trajectories of progression \cite{liquet2012investigating}.

\section*{Availability}
The implementation of the Thomas decomposition in Maple is available at

\noindent
\href{https://github.com/Computational-Systems-Biology-LPHI/Thomas_Decomposition/}{https://github.com/Computational-Systems-Biology-LPHI/Thomas$\_$Decomposition/}.

\section*{Acknowledgements}
OR and ED are founded by ANRS (ANRS0068).
MD is supported by a CNRS Prime80 grant to 
ML and OR. ML is sponsored by CNRS and ERC (SyncDev and LightRNA2Prot). We thank Werner Seiler for engaging and 
insightful  discussions, and John Reinitz for critical 
reading the manuscript.


\appendix

\section{Appendix: List of simple Thomas decomposition systems}

In the main text of the paper we have used the most
generic simple Thomas decomposition systems to solve the inverse problem. These most generic systems contain a minimum number of equations in $\vect{v}=(L_1,L_2,L_3,S_1,S_2)$ (no equation in $\vect{v}$ for solvable models). The remaining Thomas decomposition systems contain more equations in $\vect{v}$ and eventually
some equations in $\vect{k}$ are replaced by inequations.
Systems that contain only equations in
$\vect{v}$ and also some equations in
$\vect{v}$, lead to simpler formulas
for $\vect{k}$, valid for
special values of $\vect{v}$ satisfying 
equations (belonging to Zariski closed or locally closed subsets). Systems that contain  inequations 
in $\vect{k}$ lead to formulas where not 
all $k_i,\, 1\leq i \leq N$ are determined.

\subsection{The Remaining Simple Systems for Model 2}
\label{appendixmodel2}
\begin{align*}
\algsystem_2 &= \big\{ -L_2^2\,S_1^2 - L_3\,S_1^3 + 2\,L_2\,L_3\,S_1 - L_3^2 + (L_2\,S_1^3 - L_3\,S_1^2)\,k_1 + (L_2\,S_1^3 - L_3\,S_1^2)\,L_1 = 0,  \\[0.5em] \ 
&   \ L_3\,S_1 + (L_2\,S_1 - L_3)\,k_3 = 0, \ k_2\,S_1^2 + L_2\,S_1 - L_3 = 0, \ k_4 - S_1 = 0, \ k_5 + S_1 = 0, \\[0.5em] & \  L_2\,S_1 - L_3 \neq 0, \ S_1 \neq 0, \ S_2 = 0 \big\},\\[0.5em]
\algsystem_3 &= \big\{ k_1\,k_2\,S_1\,S_2 + k_2^2\,S_1\,S_2 + L_3\,S_2 + (L_2\,S_2 - L_3\,S_1)\,k_2 = 0, \ k_3\,k_2\,S_1 - L_3 = 0, \ k_2 \neq 0,  \\[0.5em] 
& \ k_4 = 0, \ k_5 + S_1 = 0, \ L_1\,S_1\,S_2 - L_2\,S_2 + L_3\,S_1 - S_2^2 = 0, \ S_1^2 - S_2 = 0, \ S_2 \neq 0 \big\}, \\[0.5em]
\algsystem_4 &= \big\{ k_1\,S_2 + k_3\,S_2 + L_2\,S_1 = 0, k_2 = 0, \ k_4\,S_1 - S_1^2 + S_2 = 0, \ k_5 + S_1 = 0, 
\\[0.5em] & \   L_1\,S_1\,S_2 - L_2\,S_1^2 - S_2^2 = 0, \ 
 L_3 = 0, \ S_1 \neq 0, \ S_2 \neq 0 \big\}, \\[0.5em]
\algsystem_5 &= \big\{ k_1 + k_3 + L_1 = 0, \ k_2 = 0, \ k_4 - S_1 = 0, \ k_5 + S_1 = 0, \ L_2 = 0, \ L_3 = 0, 
\\[0.5em] & \ S_1 \neq 0, \ S_2 = 0 \big\},\\[0.5em]
\algsystem_6 &= \big\{ k_1\,k_2 + k_2^2 + k_4^2 + k_4\,L_1 + (2\,k_4 + L_1)\,k_2 + L_2 = 0, \ k_3\,k_2 - k_2\,k_4 - k_4^2 - k_4\,L_1 - L_2 = 0,   \\[0.5em] & \  k_2 \neq 0, \  
k_5 = 0, \ L_3 = 0, \ S_1 = 0, \ S_2 = 0 \big\},\\[0.5em]
\algsystem_7 &= \big\{ k_1 + k_3 + k_4 + L_1 = 0, \ k_2 = 0, \ k_4^2 + k_4 \, L_1 + L_2 = 0, \ k_5 = 0, \ L_1^2 - 4\,L_2 \neq 0, \\[0.5em] & \ L_2 \neq 0, \ L_3 = 0, \ S_1 = 0, \ S_2 = 0 \big\}, \\[0.5em]
\algsystem_8 &= \big\{ k_1 + k_3 = 0, \ k_2 = 0, \ k_4 + L_1 = 0, \ k_5 = 0, \ L_1 \neq 0, \ L_2 = 0, \ L_3 = 0, \ S_1 = 0, \ S_2 = 0  \big\}, \\[0.5em]
\algsystem_9 &= \big\{ k_1 + k_3 + L_1 = 0, \ k_2 = 0, \ k_4 = 0, \ k_5 = 0, \ L_1 \neq 0, \ L_2 = 0, \ L_3 = 0, \ S_1 = 0, \ S_2 = 0 \big\}, \\[0.5em]
\algsystem_{10} &= \big\{ 2\,k_1 + 2\,k_3 + L_1 = 0, \ k_2 = 0, \ 2\,k_4 + L_1 = 0, \ k_5 = 0, \ L_1^2 - 4\,L_2 = 0, \ L_2 \neq 0, \ L_3 = 0, \\[0.5em] & \ S_1 = 0, \ S_2 = 0 \big\},\\[0.5em]
\algsystem_{11} &= \big\{ k_1 + k_3 = 0, \ k_2 = 0, \ k_4 = 0, \ k_5 = 0, \ L_1 = 0, \ L_2 = 0, \ L_3 = 0, \ S_1 = 0, \ S_2 = 0 \big\}.
\end{align*}

\subsection{The Remaining Simple Systems for Model 3}
\label{appendixmodel3}
\begin{align*}
 \algsystem_2 &= \big\{k_1 \, k_3 \, S_1 + k_3^2 \, S_1 + k_3 \, L_1 \, S_1 + L_3 = 0, \ k_2 \, k_3 \, S_1 - L_3 = 0, \ k_3 \neq 0, \ k_4 - S_1 = 0,  \\[0.5em] & \ k_5 + S_1 = 0, \ L_2 \, S_1 - L_3 = 0, \ S_1 \neq 0, \ S_2 = 0\}, \\[0.5em]
 \algsystem_3 &= \big\{ k_1 \, S_2 + k_2 \, S_2 + L_2 \, S_1 = 0, \ k_3 = 0, \ k_4 \, S_1 - S_1^2 + S_2 = 0, \ k_5 + S_1 = 0, \\[0.5em] & \ L_1 \, S_1 \, S_2 - L_2 \, S_1^2 - S_2^2 = 0, \ L_3 = 0, \ S_1 \neq 0, \ S_2 \neq 0 \big\},\\[0.5em]
\algsystem_4 &=  \big\{k_1 + k_2 + L_1 = 0, \ k_3 = 0, \ k_4 - S_1 = 0, \ k_5 + S_1 = 0, \ L_2 = 0, \ L_3 = 0, \ \\[0.5em] & \ S_1 \neq 0, \ S_2 = 0 \big\},\\[0.5em]
\algsystem_5 &= \big\{k_1 \, k_3 + k_3^2 + k_4^2 + k_4 \, L_1 + (k_4 + L_1) \, k_3 + L_2 = 0, \ k_2 \, k_3 - k_4^2 - k_4 \, L_1 - L_2 = 0, \\[0.5em] & \ k_3 \neq 0,  \ k_5 = 0, \ L_3 = 0, \ S_1 = 0, \ S_2 = 0 \big\}, \\[0.5em]
\algsystem_6 &= \big\{ k_1 + k_2 + k_4 + L_1 = 0, \ k_3 = 0, \ k_4^2 + k_4 \, L_1 + L_2 = 0, \ k_5 = 0, \ L_1^2 - 4 \, L_2 \neq 0,  \\[0.5em] & \ L_2 \neq 0, \ L_3 = 0, \ S_1 = 0,  \ S_2 = 0 \big\}, \\[0.5em]
\algsystem_7 &= \big\{ k_1 + k_2 = 0, \ k_3 = 0, \ k_4 + L_1 = 0, \ k_5 = 0, \ L_1 \neq 0, \ L_2 = 0, \ L_3 = 0, \ S_1 = 0, \\[0.5em] & \ S_2 = 0 \big\},\\[0.5em]
\algsystem_8 &= \big\{ k_1 + k_2 + L_1 = 0, \ k_3 = 0, \ k_4 = 0, \ k_5 = 0, \ L_1 \neq 0, \ L_2 = 0, \ L_3 = 0, \ S_1 = 0, \\[0.5em] & \ S_2 = 0 \big\},\\[0.5em]
\algsystem_9 &= \big\{ 2 \, k_1 + 2 \, k_2 + L_1 = 0, \ k_3 = 0, \ 2 \, k_4 + L_1 = 0, \ k_5 = 0, \ L_1^2 - 4 \, L_2 = 0, \ L_2 \neq 0, \\[0.5em] & \ L_3 = 0,  \ S_1 = 0, \ S_2 = 0 \big\},\\[0.5em]
\algsystem_{10} &= \big\{ k_1 + k_2 = 0, \ k_3 = 0, \ k_4 = 0, \ k_5 = 0, \ L_1 = 0, \ L_2 = 0, \ L_3 = 0, \ S_1 = 0, \ S_2 = 0 \big\}.
\end{align*}

\subsection{The Remaining Simple Systems for Model 4}
\label{appendixmodel4}
\begin{align*}
\algsystem_2 &= \big\{ L_1 \, S_1 \, S_2 - L_2 \, S_1^2 - S_2^2 + (S_1^3 - S_1 \, S_2) \, k_1 = 0, \ k_3 \, S_1 + L_1 \, S_1 - S_2 = 0, \\[0.5em] & \ -L_1 \, S_1 \, S_2 + L_2 \, S_1^2 + S_2^2 + (S_1^3 - S_1 \, S_2) \, k_2 = 0,  \ k_4 \, S_1 - S_1^2 + S_2 = 0, \ k_5 + S_1 = 0, \\[0.5em] & \ L_1 \, S_1 - S_2 \neq 0, \ L_3 = 0,  \ S_1^3 - S_1 \, S_2  \neq 0, \ S_2  \neq 0  \big\}, \\[0.5em]
\algsystem_3 &= \big\{ L_1 \, S_1^2 - L_2 \, S_1 - S_1 \, S_2 + (S_1^2 - S_2) \, k_1 = 0, \ -L_1 \, S_1 \, S_2 + L_2 \, S_1^2 + S_2^2 + (S_1^3 - S_1 \, S_2) \, k_2 = 0, \\[0.5em] & \ k_3 = 0, \ k_4 \, S_1 - S_1^2 + S_2 = 0, \ k_5 + S_1 = 0, \ L_1 \, S_1 - S_2  \neq 0, \ L_3 = 0, \ S_1^3 - S_1 \, S_2  \neq 0, \ S_2  \neq 0 \big\}, \\[0.5em]
\algsystem_4 &= \big\{-2 \, L_2 \, S_1^2 - S_1^2 \, S_2 + 2 \, L_3 \, S_1 - S_2^2 + (2 \, S_1^3 - 2 \, S_1 \, S_2) \, k_1 + (S_1^3 + S_1 \, S_2) \, L_1 = 0, \\[0.5em] & \ -L_1 \, S_1 \, S_2 + L_2 \, S_1^2 - L_3 \, S_1 + S_2^2 + (S_1^3 - S_1 \, S_2) \, k_2 = 0, \ 2 \, k_3 \, S_1 + L_1 \, S_1 - S_2 = 0,  \\[0.5em] & \ k_4 \, S_1 - S_1^2 + S_2 = 0, \ k_5 + S_1 = 0, \ L_1^2 \, S_1^2 - 2 \, L_1 \, S_1 \, S_2 - 4 \, L_3 \, S_1 + S_2^2 = 0, \\[0.5em] & \ L_3 \neq 0, \ S_1^3 - S_1 \, S_2  \neq 0, \ S_2 \neq 0 \big\}, \\[0.5em]
\algsystem_5 &= \big\{ -L_2 \, S_1 + (S_1^2 - S_2) \, k_1 = 0, \ L_2 \, S_1 + (S_1^2 - S_2) \, k_2 = 0, \ k_3 = 0, \ k_4 \, S_1 - S_1^2 + S_2 = 0, \\[0.5em] & \ k_5 + S_1 = 0, \ L_1 \, S_1 - S_2 = 0, \ L_3 = 0, \ S_1^3 - S_1 \, S_2 \neq 0, \  S_2  \neq 0 \big\}, \\[0.5em]
\algsystem_6 &= \big\{ k_1 \, S_1^2 + k_3 \, S_1^2 + L_1 \, S_1^2 - L_2 \, S_1 + L_3 = 0, \ k_2 \, S_1^2 + L_2 \, S_1 - L_3 = 0,  \ k_4 - S_1 = 0, \\[0.5em] & \ k_3^2 \, S_1 + k_3 \, L_1 \, S_1 + L_3 = 0, \ k_5 + S_1 = 0, \ L_1^2 \, S_1 - 4 \, L_3  \neq 0, \ L_3  \neq 0, \ S_1  \neq 0, \ S_2 = 0 \big\}, \\[0.5em]
\algsystem_7 &= \big\{ k_1 \, S_1 - L_2 = 0, \ k_2 \, S_1 + L_2 = 0, \ k_3 + L_1 = 0, \ k_4 - S_1 = 0, \ k_5 + S_1 = 0, \ L_1  \neq 0, \ L_3 = 0, \\[0.5em] & \ S_1 \neq 0, \ S_2 = 0 \big\}, \\[0.5em]
\algsystem_8 &= \big\{ k_1 \, S_1 + L_1 \, S_1 - L_2 = 0, \ k_2 \, S_1 + L_2 = 0, \ k_3 = 0, \ k_4 - S_1 = 0, \ k_5 + S_1 = 0, \ L_1  \neq 0,  \\[0.5em] & \ L_3 = 0, \ S_1  \neq 0, \ S_2 = 0 \big\}, \\[0.5em]
\algsystem_9 &= \big\{ 2 \, k_1 \, S_1^2 + L_1 \, S_1^2 - 2 \, L_2 \, S_1 + 2 \, L_3 = 0, \ k_2 \, S_1^2 + L_2 \, S_1 - L_3 = 0, \ 2 \, k_3 + L_1 = 0, \ k_4 - S_1 = 0, \\[0.5em] & \ k_5 + S_1 = 0, \ L_1^2 \, S_1 - 4 \, L_3 = 0, \ L_3  \neq 0, \ S_1  \neq 0, \ S_2 = 0 \big\}, \\[0.5em] 
\algsystem_{10} &= \big\{ k_1 \, S_1 - L_2 = 0, \ k_2 \, S_1 + L_2 = 0, \ k_3 = 0, \ k_4 - S_1 = 0, \ k_5 + S_1 = 0, \ L_1 = 0, \ L_3 = 0, \\[0.5em] & \ S_1  \neq 0, \ S_2 = 0 \big\}, \\[0.5em]
\algsystem_{11} &= \big\{ k_1 \, S_1 \, S_2 + k_2 \, S_1 \, S_2 + k_3 \, S_1 \, S_2 + L_2 \, S_2 - L_3 \, S_1 = 0, \ k_4 = 0, \ k_5 + S_1 = 0, \\[0.5em] & \ k_3^2 \, S_1 \, S_2 + L_3 \, S_2 + (L_2 \, S_2 - L_3 \, S_1) \, k_3 = 0,  \ L_1 \, S_1 \, S_2 - L_2 \, S_2 + L_3 \, S_1 - S_2^2 = 0, \\[0.5em] & \ L_2^2 \, S_2 - 2 \, L_2 \, L_3 \, S_1 - 4 \, L_3 \, S_1 \, S_2 + L_3^2 \neq 0, \ L_3  \neq 0, \ S_1^2 - S_2 = 0, \ S_2  \neq 0 \big\}, \\[0.5em]
\algsystem_{12} &= \big\{ k_1 + k_2 = 0, \ k_3 \, S_1 + L_2 = 0, \ k_4 = 0, \ k_5 + S_1 = 0, \ L_1 \, S_1 - L_2 - S_2 = 0, \ L_2 \neq 0,\\[0.5em] & \ L_3 = 0,  \ S_1^2 - S_2 = 0, \ S_2  \neq 0 \big\}, \\[0.5em]
\algsystem_{13} &= \big\{ k_1 \, S_1 + k_2 \, S_1 + L_2 = 0, \ k_3 = 0, \ k_4 = 0, \ k_5 + S_1 = 0, \ L_1 \, S_1 - L_2 - S_2 = 0, \ L_2 \neq 0, \\[0.5em] & \ L_3 = 0,  \ S_1^2 - S_2 = 0, \ S_2  \neq 0 \big\}, \\[0.5em]
\algsystem_{14} &= \big\{ 2 \, k_1 \, S_1 \, S_2 + 2 \, k_2 \, S_1 \, S_2 + L_2 \, S_2 - L_3 \, S_1 = 0, \ 2 \, k_3 \, S_1 \, S_2 + L_2 \, S_2 - L_3 \, S_1 = 0, \ k_4 = 0,  \\[0.5em] & \ k_5 + S_1 = 0, \ L_1 \, S_1 \, S_2 - L_2 \, S_2 + L_3 \, S_1 - S_2^2 = 0, \ L_2^2 \, S_2 - 2 \, L_2 \, L_3 \, S_1 - 4 \, L_3 \, S_1 \, S_2 + L_3^2 = 0,  \\[0.5em] & \ L_3  \neq 0, \ S_1^2 - S_2 = 0, \ S_2  \neq 0\}, \\[0.5em]
\algsystem_{15} &= \big\{ k_1 + k_2 = 0, \ k_3 = 0, \ k_4 = 0, \ k_5 + S_1 = 0, \ L_1 \, S_1 - S_2 = 0, \ L_2 = 0, \ L_3 = 0, \\[0.5em] & \ S_1^2 - S_2 = 0, \ S_2  \neq 0 \big\}, \\[0.5em]
\algsystem_{16} &= \big\{ k_1 \, k_4 - k_3^2 - k_3 \, L_1 - L_2 = 0, \ k_2 \, k_4 + k_3^2 + k_4^2 + k_4 \, L_1 + (k_4 + L_1) \, k_3 + L_2 = 0, \ k_4  \neq 0,  \\[0.5em] & \ k_5 = 0, \ L_3 = 0, \ S_1 = 0, \ S_2 = 0 \big\}, \\[0.5em]
\algsystem_{17} &= \big\{ k_1 + k_2 + k_3 + L_1 = 0, \ k_3^2 + k_3 \, L_1 + L_2 = 0, \ k_4 = 0, \ k_5 = 0, \ L_1^2 - 4 \, L_2  \neq 0, \ L_2  \neq 0, \\[0.5em] & \ L_3 = 0, \ S_1 = 0, \ S_2 = 0 \big\}, \\[0.5em]
\algsystem_{18} &= \big\{k_1 + k_2 = 0, \ k_3 + L_1 = 0, \ k_4 = 0, \ k_5 = 0, \ L_1  \neq 0, \ L_2 = 0, \ L_3 = 0, \ S_1 = 0, \ S_2 = 0 \big\}, \\[0.5em]
\algsystem_{19} &= \big\{ k_1 + k_2 + L_1 = 0, \ k_3 = 0, \ k_4 = 0, \ k_5 = 0, \ L_1 \neq 0, \ L_2 = 0, \ L_3 = 0, \ S_1 = 0, \ S_2 = 0 \big\}, \\[0.5em] 
\algsystem_{20} &= \big\{ 2 \, k_1 + 2 \, k_2 + L_1 = 0,  \ 2 \, k_3 + L_1 = 0, \ k_4 = 0, \ k_5 = 0, \ L_1^2 - 4 \, L_2 = 0, \ L_2  \neq 0, \\[0.5em] & \ L_3 = 0, \ S_1 = 0, \ S_2 = 0 \big\}, \\[0.5em]
\algsystem_{21} &= \big\{ k_1 + k_2 = 0, \ k_3 = 0, \ k_4 = 0, \ k_5 = 0, \ L_1 = 0, \ L_2 = 0, \ L_3 = 0, \ S_1 = 0, \ S_2 = 0 \big\}.
\end{align*}

\subsection{The Remaining Simple Systems for Model 8}
\label{appendixmodel8}
\begin{align*}
\algsystem_2 &= \big\{k_1 = 0, \ -L_1 \, S_1^2 + L_2 \, S_1 + S_1 \, S_2 + (L_1 \, S_1 - S_2) \, k_3 = 0, 
\\[0.5em] & \ L_1 \, S_1 \, S_2 - L_2 \, S_1^2 - S_2^2 + (L_1 \, S_1^2 - S_1 \, S_2) \, k_4 = 0, \ L_1 \, S_1 + S_1 \, k_2 - S_2 = 0, \ k_5 + S_1 = 0, \\[0.5em] & \ L_1 \, S_1 - S_2  \neq 0, \ L_3 = 0, \ S_1  \neq 0 \big\}, \\[0.5em]
\algsystem_3 &= \big\{(L_1 \, S_1 - S_2) \, k_1 + 2 \, L_3 = 0, \\[0.5em] & \ 2 \, L_2 \, S_1^2 + S_1^2 \, S_2 - 2 \, L_3 \, S_1 + S_2^2 + (L_1 \, S_1^2 - S_1 \, S_2) \, k_3 + (-S_1^3 - S_1 \, S_2) \, L_1 = 0,
\\[0.5em] & \ 2 \, L_1 \, S_1 \, S_2 - 2 \, L_2 \, S_1^2 + 2 \, L_3 \, S_1 - 2 \, S_2^2 + (L_1 \, S_1^2 - S_1 \, S_2)\, k_4 = 0, \ L_1 \, S_1 + 2 \, S_1 \, k_2 - S_2 = 0, \\[0.5em] & \ k_5 + S_1 = 0, \ L_1^2 \, S_1^2 - 2 \, L_1 \, S_1 \, S_2 - 4 \, L_3 \, S_1 + S_2^2 = 0, \ L_3  \neq 0, \ S_1  \neq 0 \big\},  \\[0.5em]
\algsystem_4 &= \big\{ L_2 \, S_1 + S_2 \, k_1 = 0, \ -S_1^2 + S_1 \, k_3 + S_1 \, k_4 + S_2 = 0, \ k_2 = 0, \ k_5 + S_1 = 0, \\[0.5em] & \ L_1 \, S_1 \, S_2 - L_2 \, S_1^2 - S_2^2 = 0, \ L_3 = 0, \ S_1  \neq 0, \ S_2  \neq 0 \big\},  \\[0.5em]
\algsystem_5 &= \big\{ k_1 + L_1 = 0, \ k_3 + k_4 - S_1 = 0, \ k_2 = 0, \ k_5 + S_1 = 0, \ L_2 = 0, \ L_3 = 0, \ S_1 \neq 0, \ S_2 = 0 \big\}, \\[0.5em]
\algsystem_6 &= \big\{k_1 + k_3 + k_4 + k_2 + L_1 = 0, \\[0.5em] & \ k_3^2 + k_4^2 + k_2^2 + k_2 \, L_1 + (2 \, k_4 + k_2 + L_1) \, k_3 + (2 \, k_2 + L_1) \, k_4 + L_2 = 0, \\[0.5em] & \ -L_1^2 + 2 \, L_1 \, k_2 + 3 \, k_2^2 + 4 \, k_2 \, k_4 + 4 \, L_2  \neq 0, \ k_2  \neq 0, \ k_5 = 0, \ L_3 = 0, \ S_1 = 0, \ S_2 = 0 \big\}, \\[0.5em]
\algsystem_7 &= \big\{ k_1 + k_3 + k_4 + L_1 = 0, \ k_3^2 + k_4^2 + k_4 \, L_1 + (2 \, k_4 + L_1) \, k_3 + L_2 = 0, \ k_2 = 0, \ k_5 = 0, \\[0.5em] & \ L_1^2 - 4 \, L_2  \neq 0, \ L_2  \neq 0, \ L_3 = 0, \ S_1 = 0, \ S_2 = 0 \big\}, \\[0.5em]
\algsystem_8 &= \big\{k_1 = 0, \ k_3 + k_4 + L_1 = 0, \ k_2 = 0, \ k_5 = 0, \ L_1 \neq 0, \ L_2 = 0, \ L_3 = 0, \ S_1 = 0, \ S_2 = 0 \big\},  \\[0.5em]
\algsystem_9 &= \big\{ k_1 + L_1 = 0, \ k_3 + k_4 = 0, \ k_2 = 0, \ k_5 = 0, \ L_1 \neq 0, \ L_2 = 0, \ L_3 = 0, \ S_1 = 0, \ S_2 = 0 \big\}, \\[0.5em]
\algsystem_{10} &= \big\{ 2\, k_1 + k_2 + L_1 = 0, \ L_1^2 - k_2^2 + 4 \, k_2 \, k_3 - 4 \, L_2 = 0, \\[0.5em] & \ -L_1^2 + 2 \, L_1 \, k_2 + 3 \, k_2^2 + 4 \, k_2 \, k_4 + 4 \, L_2 = 0,  \ k_2 \neq 0, \ k_5 = 0, \ L_3 = 0, \ S_1 = 0, \ S_2 = 0 \big\}, \\[0.5em]
\algsystem_{11} &= \big\{ 2 \, k_1 + L_1 = 0, \ 2 \, k_3 + 2 \, k_4 + L_1 = 0, \ k_2 = 0, \ k_5 = 0, \ L_1^2 - 4 \, L_2 = 0, \ L_2  \neq 0, \\[0.5em] & \ L_3 = 0, \ S_1 = 0, \ S_2 = 0 \big\},  \\[0.5em]
\algsystem_{12} &= \big\{k_1 = 0, \ k_3 + k_4 = 0, \ k_2 = 0, \ k_5 = 0, \ L_1 = 0, \ L_2 = 0, \ L_3 = 0, \ S_1 = 0, \ S_2 = 0 \big\}.
\end{align*}

\subsection{The Remaining Simple Systems for Model 9}
\label{appendixmodel9}
\begin{align*}
\algsystem_2 &= \big\{ -L_1 \, S_1^2 + L_2 \, S_1 + S_1 \, S_2 + (L_1 \, S_1 - S_2) \, k_3 = 0, \ k_1 = 0, 
\\[0.5em] & \ L_1 \, S_1 \, S_2 - L_2 \, S_1^2 - S_2^2 + (L_1 \, S_1^2 - S_1 \, S_2) \, k_4 = 0, \ k_2 \, S_1 + L_1 \, S_1 - S_2 = 0, \ k_5 + S_1 = 0, \\[0.5em] & \ L_1 \, S_1 - S_2  \neq 0, \ L_3 = 0, \ S_1  \neq 0 \big\}, \\[0.5em]
\algsystem_3 &= \big\{ L_1 \, S_1 \, S_2 - L_2 \, S_1^2 - S_2^2 + (L_1 \, S_1^2 - S_1 \, S_2) \, k_3 = 0, \ k_1 \, S_1 + L_1 \, S_1 - S_2 = 0, \\[0.5em] & \ -L_1 \, S_1^2 + L_2 \, S_1 + S_1 \, S_2 + (L_1 \, S_1 - S_2) \, k_4 = 0, \ k_2 = 0, \ k_5 + S_1 = 0, \ L_1 \, S_1 - S_2  \neq 0, \\[0.5em] & \ L_3 = 0, \ S_1 \neq 0 \big\}, \\[0.5em]
\algsystem_4 &= \big\{ k_3 \, S_1 + k_4 \, S_1 - S_1^2 + S_2 = 0, \ L_2 \, S_1 + (S_1^2 + S_2) \, k_1 - L_3 = 0,  \ k_5 + S_1 = 0, \\[0.5em] & \ L_2 \, S_1 + (S_1^2 + S_2) \, k_2 - L_3 = 0, \ -2 \, L_2 \, S_1^2 - S_1^2 \, S_2 + 2 \, L_3 \, S_1 - S_2^2 + (S_1^3 + S_1 \, S_2) \, L_1 = 0, \\[0.5em] & \ L_2^2 \, S_1^3 - 2 \, L_2 \, L_3 \, S_1^2 + L_3^2 \, S_1 + (-S_1^4 - 2 \, S_1^2 \, S_2 - S_2^2) \, L_3 = 0, \ L_3  \neq 0, \ S_1^3 + S_1 \, S_2 \neq 0, \ S_2  \neq 0 \big\}, \\[0.5em]
\algsystem_5 &= \big\{ k_3 \, S_1 + k_4 \, S_1 - S_1^2 + S_2 = 0, \ k_1 = 0, \ k_2 = 0, \ k_5 + S_1 = 0, \ L_1 \, S_1 - S_2 = 0, \ L_2 = 0, \\[0.5em] & \ L_3 = 0, \ S_1^3 + S_1 \, S_2  \neq 0, \ S_2  \neq 0 \big\}, \\[0.5em] 
\algsystem_6 &= \big\{ k_3 + k_4 - S_1 = 0, \ k_1 \, S_1^2 + L_2 \, S_1 - L_3 = 0, \ k_2 \, S_1^2 + L_2 \, S_1 - L_3 = 0, \ k_5 + S_1 = 0, \\[0.5em] & \ L_1 \, S_1^2 - 2 \, L_2 \, S_1 + 2 \, L_3 = 0, \ L_2^2 \, S_1^2 - L_3 \, S_1^3 - 2 \, L_2 \, L_3 \, S_1 + L_3^2 = 0, \ L_3  \neq 0, \ S_1  \neq 0, \ S_2 = 0 \big\}, \\[0.5em] 
\algsystem_7 &= \big\{ k_3 + k_4 - S_1 = 0, \ k_1 = 0, \ k_2 = 0, \ k_5 + S_1 = 0, \ L_1 = 0, \ L_2 = 0, \ L_3 = 0,  \ S_1  \neq 0, \\[0.5em] &\ S_2 = 0 \big\}, \\[0.5em] 
\algsystem_8 &= \big\{ k_3 \, S_1 + k_4 \, S_1 + 2 \, S_2 = 0, \ 2 \, k_1 \, S_1 + L_1 \, S_1 - S_2 = 0, \ 2 \, k_2 \, S_1 + L_1 \, S_1 - S_2 = 0, \ k_5 + S_1 = 0, \\[0.5em] & \ L_1^2 \, S_2 + 2 \, L_1 \, S_1 \, S_2 + 4 \, L_3 \, S_1 - S_2^2 = 0, \ L_2 \, S_2 + L_3 \, S_1 = 0, \ L_3  \neq 0, \ S_1^2 + S_2 = 0, \ S_2  \neq 0 \big\}, \\[0.5em]
\algsystem_9 &= \big\{ k_3 \, S_1 + k_4 \, S_1 + 2 \, S_2 = 0, \ k_1 = 0, \ k_2 = 0, \ k_5 + S_1 = 0, \ L_1 + S_1 = 0, \ L_2 = 0, \ L_3 = 0, \\[0.5em] & \ S_1^2 + S_2 = 0, \ S_2  \neq 0 \big\}, \\[0.5em] 
\algsystem_{10} &= \big\{ k_3 \, k_4 + k_4^2 + k_2^2 + k_2 \, L_1 + (2 \, k_2 + L_1) \, k_4 + L_2 = 0, \ k_1 \, k_4 - k_4 \, k_2 - k_2^2 - k_2 \, L_1 - L_2 = 0, \\[0.5em] & \ k_4  \neq 0, \ k_5 = 0, \ L_3 = 0, \ S_1 = 0, \ S_2 = 0 \big\}, \\[0.5em] 
\algsystem_{11} &= \big\{ k_3 + k_1 + k_2 + L_1 = 0, \ k_4 = 0, \ k_2^2 + k_2 \, L_1 + L_2 = 0, \ k_5 = 0, \ L_1^2 - 4 \, L_2  \neq 0, \ L_2  \neq 0, \\[0.5em] & \ L_3 = 0, \ S_1 = 0, \ S_2 = 0 \big\}, \\[0.5em] 
\algsystem_{12} &= \big\{ k_3 + k_1 = 0, \ k_4 = 0, \ k_2 + L_1 = 0, \ k_5 = 0, \ L_1  \neq 0, \  L_2 = 0, \ L_3 = 0, \ S_1 = 0, \ S_2 = 0 \big\}, \\[0.5em]
\algsystem_{13} &= \big\{ k_3 + k_1 + L_1 = 0, \ k_4 = 0, \ k_2 = 0, \ k_5 = 0, \ L_1  \neq 0, \ L_2 = 0, \ L_3 = 0, \ S_1 = 0, \ S_2 = 0 \big\}, \\[0.5em]
\algsystem_{14} &= \big\{ 2 \, k_3 + 2 \, k_1 + L_1 = 0, \ k_4 = 0, \ 2 \, k_2 + L_1 = 0, \ k_5 = 0, \ L_1^2 - 4 \, L_2 = 0, \ L_2  \neq 0, \\[0.5em] & \ L_3 = 0, \ S_1 = 0, \ S_2 = 0 \big\}, \\[0.5em] 
\algsystem_{15} &= \big\{ k_3 + k_1 = 0, \ k_4 = 0, \ k_2 = 0, \ k_5 = 0, \ L_1 = 0, \ L_2 = 0, \ L_3 = 0, \ S_1 = 0, \ S_2 = 0 \big\}.
\end{align*}

\end{document}